\let\footnote=\endnote
\pgfplotsset{compat=1.17}
\newcolumntype{C}[1]{>{\centering\let\newline\\\arraybackslash\hspace{0pt}}m{#1}}
\definecolor{tempcolor}{RGB}{244,244,244}
\tikzset{
    split/.style = {shape=rectangle,
                     draw, align=center,
                     fill=tempcolor},
    clust/.style = {align=center,
                    draw,
                    rectangle}
                     }
\tikzstyle{process} = [rectangle, minimum width=3cm, minimum height=1cm, text centered, draw=black, fill=orange!30]
\tikzstyle{arrow} = [thick,->,>=stealth]
\begin{document}


\RUNAUTHOR{Maragno et al.} 

\RUNTITLE{Mixed-integer Optimization with Constraint Learning}

\TITLE{Mixed-Integer Optimization with \\ Constraint Learning}

\ARTICLEAUTHORS{%
\AUTHOR{Donato Maragno\footnote{These authors contributed equally.}}
\AFF{Amsterdam Business School, University of Amsterdam, 1018 TV Amsterdam,
Netherlands \EMAIL{d.maragno@uva.nl}}
\AUTHOR{Holly Wiberg\footnotemark[\value{footnote}]}
\AFF{Operations Research Center, Massachusetts Institute of Technology, Cambridge MA 02139 \EMAIL{hwiberg@mit.edu}}
\AUTHOR{Dimitris Bertsimas}
\AFF{Sloan School of Management, Massachusetts Institute of Technology, Cambridge MA 02139 \EMAIL{dbertsim@mit.edu}}
\AUTHOR{\c{S}. \.{I}lker Birbil, Dick den Hertog, Adejuyigbe O. Fajemisin}
\AFF{Amsterdam Business School, University of Amsterdam, 1018 TV Amsterdam,
Netherlands \\ \EMAIL{s.i.birbil@uva.nl} \EMAIL{d.denhertog@uva.nl} \EMAIL{a.o.fajemisin2@uva.nl}}
} 
\ABSTRACT{%
We establish a broad methodological foundation for mixed-integer optimization with learned constraints. We propose an end-to-end pipeline for data-driven decision making in which constraints and objectives are directly learned from data using machine learning, and the trained models are embedded in an optimization formulation. We exploit the mixed-integer optimization-representability of many machine learning methods, including linear models, decision trees, ensembles, and multi-layer perceptrons,
which allows us to capture various underlying relationships between decisions, contextual variables, and outcomes.  We also introduce two approaches for handling the inherent uncertainty of learning from data. First, we characterize a decision trust region using the convex hull of the observations, to ensure credible recommendations and avoid extrapolation. We efficiently incorporate this representation using column generation and propose a more flexible formulation to deal with low-density regions and high-dimensional datasets. Then, we propose an ensemble learning approach that enforces constraint satisfaction over 
multiple bootstrapped estimators or multiple algorithms. In combination with domain-driven components, 
the embedded models and trust region define a mixed-integer optimization problem for prescription generation. We implement this framework as a Python package (\texttt{OptiCL}) for practitioners. We demonstrate the method in both World Food Programme planning and chemotherapy optimization. The case studies illustrate the framework's ability to generate high-quality prescriptions as well as the value added by the trust region, the use of ensembles to control model robustness, the consideration of multiple machine learning methods, and the inclusion of multiple learned constraints.
}%

\KEYWORDS{mixed-integer optimization, machine learning, constraint learning, prescriptive analytics} 

\maketitle

\section{Introduction}
Mixed-integer optimization (MIO) is a powerful tool that allows us to optimize a given objective subject to various constraints. This general problem statement of optimizing under constraints is nearly universal in decision-making settings. Some problems have readily quantifiable and explicit objectives and constraints, in which case MIO can be directly applied. The situation becomes more complicated, however, when the constraints and/or objectives are not explicitly known. 


For example, suppose we deal with cancerous tumors and want to prescribe a treatment regimen with a limit on toxicity; we may have observational data on treatments and their toxicity outcomes, but we have no natural function that relates the treatment decision to its resultant toxicity. We may also encounter constraints that are not directly quantifiable. Consider a setting where we want to recommend a diet, defined by a combination of foods and quantities, that is sufficiently ``palatable." Palatability cannot be written as a function of the food choices, but we may have qualitative data on how well people ``like" various potential dietary prescriptions. In both of these examples, we cannot directly represent the outcomes of interest as functions of our decisions, but we have \textit{data} that relates the outcomes and decisions. This raises a question: how can we consider data to learn these functions?

In this work, we tackle the challenge of data-driven decision making through a combined machine learning (ML) and MIO approach. ML allows us to learn functions that relate decisions to outcomes of interest directly through data. Importantly, many popular ML methods result in functions that are MIO-representable, meaning that they can be embedded into MIO formulations. This MIO-representable class includes both linear and nonlinear models, allowing us to capture a broad set of underlying relationships in the data. While the idea of learning functions directly from data is core to the field of ML, data is often underutilized in MIO settings due to the need for functional relationships between decision variables and outcomes. We seek to bridge this gap through \textit{constraint learning}; we propose a general framework that allows us to learn constraints and objectives directly from data, using ML, and to optimize decisions accordingly, using MIO. Once the learned constraints have been incorporated into the larger MIO, we can solve the problem directly using off-the-shelf solvers.

The term \textit{constraint learning}, used several times throughout this work, captures both constraints and objective functions. We are fundamentally learning functions to relate our decision variables to the outcome(s) of interest. The predicted values can then either be incorporated as constraints or objective terms; the model learning and embedding procedures remain largely the same. For this reason, we refer to them both under the same umbrella of \textit{constraint learning}. We describe this further in Section~\ref{subsect:MIP_rep}.

\subsection{Literature review}
\label{subsect:litrev}

Previous work has demonstrated the use of various ML methods in MIO problems and their utility in different application domains. The simplest of these methods is the regression function, as the approach is easy to understand and easy to implement. Given a regression function learned from data, the process of incorporating it into an MIO model is straightforward, and the final model does not require complex reformulations. As an example, \cite{Bertsimas2016} use regression models and MIO to develop new chemotherapy regimens based on existing data from previous clinical trials. \cite{kleijnen2015design} provides further information on this subject.

More complex ML models have also been shown to be MIO-representable, although  more effort is required to represent them than simple regression models. Neural networks which use the ReLU activation function can be represented using binary variables and big-M formulations \citep{Amos2016,Grimstad2019,Anderson2020,Chen2020b,Spyros2020,Venzke2020}. Where other activation functions are used \citep{Gutierrez-Martinez2011,lombardi2017empirical,Schweidtmann2019}, the MIO representation of neural networks is still possible, provided the solvers are capable of handling these functions.

With decision trees, each path in the tree from root to leaf node can be represented using one or more constraints \citep{Bonfietti2015,Verwer2017,Halilbasic2018}. The number of constraints required to represent decision trees is a function of the  tree size, with larger trees requiring more linearizations and binary variables. The advantage here, however, is that decision trees are known to be highly interpretable, which is often a requirement of ML in critical application settings~\citep{Thams2017}. Random forests \citep{Biggs2017,Misic2020} and other tree ensembles \citep{Cremer2019} have also been used in MIO in the same way as decision trees, with one set of constraints for each tree in the forest/ensemble along with one or more additional aggregate constraints. 

Data for constraint learning can either contain information on continuous data, feasible and infeasible states (two-class data), or only one state (one-class data). The problem of learning functions from one-class data and embedding them into optimization models has been recently investigated with the use of decision trees \citep{Kuda2018}, genetic programming \citep{Pawlak2019a}, local search \citep{Sroka2018}, evolutionary strategies \citep{Pawlak2019b}, and a combination of clustering, principal component analysis and wrapping ellipsoids \citep{Pawlak2021}.

The above selected applications generally involve a single function to be learned and a fixed ML method for the model choice. \citet{Verwer2017} use two model classes (decision trees and linear models) in a specific auction design application, but in this case the models were determined a priori. Some authors have presented a more general framework of embedding learned ML models in optimization problems such as JANOS \citep{bergman2019janos} and EML \citep{lombardi2017empirical}, but in practice these works are restricted to limited problem structures and learned model classes. We take a broader perspective, proposing a comprehensive end-to-end pipeline that encompasses the full ML and optimization components of a data-driven decision making problem. In contrast to EML and JANOS, \texttt{OptiCL} supports a wider variety of predictive models --- neural networks (with ReLU), linear regression, logistic regression, decision trees, random forests, gradient boosted trees and linear support vector machines. \texttt{OptiCL} is also more flexible than JANOS, as it can handle predictive models as constraints, and it also incorporates new concepts to deal with uncertainty in the ML models.
A comparison of \texttt{OptiCL} against  JANOS and EML on two test problems is shown in Appendix \ref{appendix:opticlvjanos}.

Our work falls under the umbrella of prescriptive analytics. \citet{Bertsimas2020} and \citet{Elmachtoub2021} leverage ML model predictions as inputs into an optimization problem. Our approach is distinct from existing work in that we directly embed ML models rather than extracting predictions, allowing us to optimize our decisions over the model. In the broadest sense, our framework relates to work that jointly harnesses ML and MIO, an area that has garnered significant interest in recent years in both the optimization and machine learning communities~\citep{bengio2021}.


\subsection{Contributions}
Our work unifies several research areas in a comprehensive manner. Our key contributions are as follows: 
\begin{enumerate}
    \item We develop an end-to-end framework that takes data and directly implements model training, model selection, integration into a larger MIO, and ultimately optimization. We make this available as an open-source software, \texttt{OptiCL} (Optimization with Constraint Learning) to provide a practitioner-friendly tool for making better data-driven decisions. The code is available at \texttt{https://github.com/hwiberg/OptiCL}. The software encompasses the full ML and optimization pipeline with the goal of being accessible to end users as well as extensible by technical researchers. Our framework natively supports models for both regression and classification functions and handles constraint learning in cases with both one-class and two-class data. We implement a cross-validation procedure for function learning that selects from a broad set of model classes. We also implement the optimization procedure in the generic mathematical modeling library Pyomo, which supports various state-of-the-art solvers.
    We introduce two approaches for handling the inherent uncertainty when learning from data. First, we propose an ensemble learning approach that enforces constraint satisfaction over an ensemble of multiple bootstrapped estimators or multiple algorithms, yielding more robust solutions. This addresses a shortcoming of existing approaches to embedding trained ML models, which rely on a single point prediction: in the case of learned constraints, model misspecification can lead to infeasibility. 
    Additionally, we restrict solutions to lie within a trust region, defined as the domain of the training data, which leads to better performance of the learned constraints. We offer several improvements to a basic convex hull formulation, including a clustering heuristic and a column selection algorithm that significantly reduce computation time. We also propose an enlargement of the convex hull which allows for exploration of solutions outside of the observed bounds. Both the ensemble model wrapper and trust region enlargement are controlled by parameters that allow an end user to directly trade-off the conservativeness of the constraint satisfaction. 
    \item We demonstrate the power of our method in two real-world case studies, using data from the World Food Programme and chemotherapy clinical trials. We pose relevant questions in the respective areas and formalize them as constraint learning problems. We implement our framework and subsequently evaluate the quantitative performance and scalability of our methods in these settings. 
\end{enumerate}

\section{Embedding predictive models}
\label{sec:methodology}
Suppose we have data $\mathcal{D} = \{ (\bm{\bar{x}}_i, \bm{\bar{w}}_i, \bm{\bar{y}}_i)\}_{i=1}^N$, with observed treatment decisions $\bm{\bar{x}}_i$, contextual information $\bm{\bar{w}}_i$, and outcomes of interest $\bm{\bar{y}}_i$ for sample $i$. Following the guidelines proposed in \cite{fajemisin2021optimization}, we present a framework that, given data $\mathcal{D}$, learns functions for the outcomes of interest ($\bm{y}$) that are to be constrained or optimized. These learned representations can then be used to generate predictions for a new observation with context $\bm{w}$. Figure~\ref{fig:overview} outlines the complete pipeline, which is detailed in the sections below.

\begin{figure}
    \FIGURE{\includegraphics[width=0.9\textwidth]{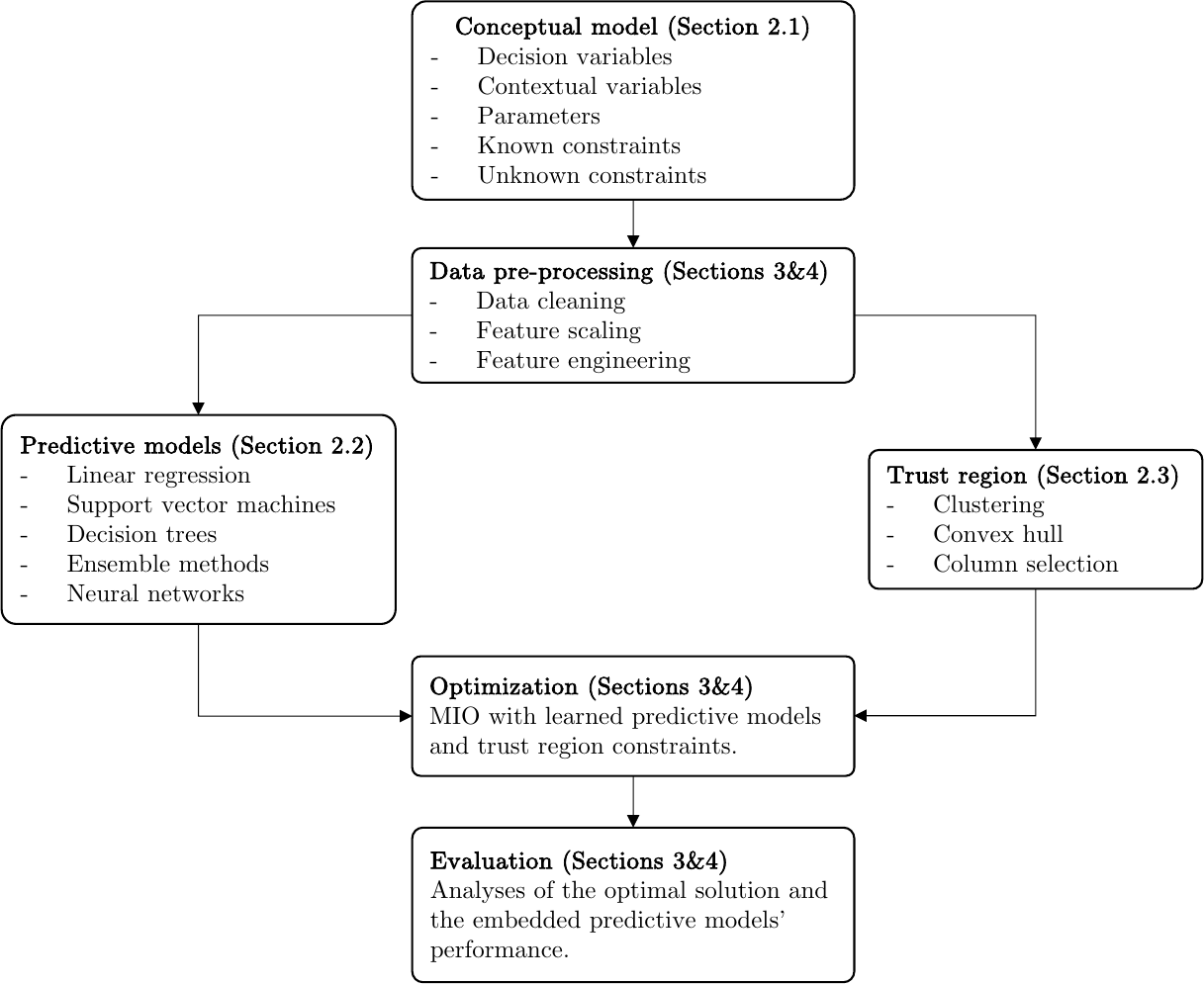}}
    {Constraint learning and optimization  pipeline.\label{fig:overview}}
    {}
\end{figure}

\subsection{Conceptual model}\label{subsect:conceptual}

Given the decision variable $\bm{x} \in \mathbb{R}^n$ and the fixed feature vector $\bm{w} \in \mathbb{R}^p$, we propose model \text{M}($\bm{w}$)
\begin{align}
\begin{aligned}
\label{eqn:conceptualmodel}
    \min_{\bm{x}\in \mathbb{R}^n,\bm{y}\in \mathbb{R}^k} \ & f(\bm{x}, \bm{w}, \bm{y}) & \\
    \mbox{s.t.} \ & \bm{g}(\bm{x}, \bm{w}, \bm{y}) \leq \bm{0}, & \\
    & \bm{y} = \hat{\bm{h}}_\mathcal{D}(\bm{x},\bm{w}), & \\
    & \bm{x} \in \mathcal{X}(\bm{w}), &
\end{aligned}
\end{align}
where $f(.,\bm{w},.):\mathbb{R}^{n+k} \mapsto \mathbb{R}$, $\bm{g}(.,\bm{w},.):\mathbb{R}^{n+k} \mapsto \mathbb{R}^m$, and $\hat{\bm{h}}_{\mathcal{D}}(.,\bm{w}):\mathbb{R}^{n} \mapsto \mathbb{R}^k$. Explicit forms of $f$ and $\bm{g}$ are known but they may still depend on the predicted outcome $\bm{y}$.
Here, $\hat{\bm{h}}_\mathcal{D}(\bm{x},\bm{w})$ represents the predictive models, one per outcome of interest, which are ML models trained on $\mathcal{D}$. Although our subsequent discussion mainly revolves around linear functions, we acknowledge the significant progress in nonlinear (convex) integer solvers. Our discussion can be easily extended to nonlinear models that can be tackled by those ever-improving solvers.


We note that the embedding of a single learned outcome may require multiple constraints and auxiliary variables; the embedding formulations are described in Section~\ref{subsect:MIP_rep}. For simplicity, we omit $\mathcal{D}$ in further notation of $\hat{\bm{h}}$ but note that all references to $\hat{\bm{h}}$ implicitly depend on the data used to train the model. Finally, the set $\mathcal{X}(\bm{w})$ defines the trust region, \textit{i.e.}, the set of solutions for which we trust the embedded predictive models. In Section \ref{subsect:convex_hull_as_trust_region}, we provide a detailed description of how the trust region $\mathcal{X}(\bm{w})$ is obtained from the observed data. We refer to the final MIO formulation with the embedded constraints and variables as EM($\bm{w}$). 

Model M($\bm{w}$) is quite general and encompasses several important \textit{constraint learning} classes:
\begin{enumerate}
    \item \textbf{Regression.} When the trained model results from a regression problem, it can be constrained by a specified upper bound $\tau$, \textit{i.e.}, $g(y) = y - \tau \leq 0$, or lower bound $\tau$, \textit{i.e.}, $g(y) = - y + \tau \leq 0$. If $\bm{y}$ is a vector (\textit{i.e.}, multi-output regression), we can likewise provide a threshold vector $\bm{\tau}$ for the constraints.
    \item \textbf{Classification.} If the trained model is obtained with a binary classification algorithm, in which the data is labeled as ``feasible" (1) or ``infeasible" (0), then the prediction is generally a probability $y \in [0,1]$. We can enforce a lower bound on the feasibility probability, \textit{i.e.}, $y \geq \tau$. A natural choice of $\tau$ is 0.5, which can be interpreted as enforcing that the result is more likely feasible than not. This can also extend to the multi-class setting, say $k$ classes, in which the output $\bm{y}$ is a $k$-dimensional unit vector, and we apply the constraint $y_i \geq \tau$ for whichever class $i$ is desired. When multiple classes are considered to be feasible, we can add binary variables to ensure that a solution is feasible, only if it falls in one of these classes with sufficiently high probability.
    \item \textbf{Objective function.} If the objective function has a term that is also learned by training an ML model, then we can introduce an auxiliary variable $t \in \mathbb{R}$, and add it to the objective function along with an epigraph constraint. Suppose for simplicity that the model involves a single learned objective function, $\hat{h}$, and no learned constraints. Then the general model becomes
    \begin{align*}
    \min_{\bm{x}\in \mathbb{R}^n,y\in \mathbb{R}, t\in\mathbb{R}} \ & t & \\
    \mbox{s.t.} \ & \bm{g}(\bm{x},\bm{w}) \leq 0, & \\
    & y = \hat{h}(\bm{x},\bm{w}), & \\
    & y - t \leq 0, & \\
    & \bm{x} \in \mathcal{X}(\bm{w}). &
    \end{align*}
    Although we have rewritten the problem to show the generality of our model, it is quite common in practice to use $y$ in the objective and omit the auxiliary variable $t$. 
\end{enumerate}

We observe that constraints on learned outcomes can be applied in two ways depending on the model training approach. Suppose that we have a continuous scalar outcome $y$ to learn and we want to impose an upper bound of $\tau \in \mathbb{R}$ (it may also be a lower bound without loss of generality). The first approach is called \textit{function learning} and concerns all cases where we learn a regression function $\hat{h} (\bm{x},\bm{w})$ without considering the feasibility threshold ($\tau$). The resultant model returns a predicted value $y \in \mathbb{R}$. The threshold is then applied as a constraint in the optimization model as $y \leq \tau$. Alternatively, we could use the feasibility threshold $\tau$ to binarize the outcome of each sample in $\mathcal{D}$ into feasible and infeasible, that is $\bar{y}_i \coloneqq \mathbb{I}(\bar{y}_i\leq \tau), \ i=1,\dots,N$, where $\mathbb{I}$ stands for the indicator function. After this relabeling, we train a binary classification model $\hat{h} (\bm{x},\bm{w})$ that returns a probability $y \in [0,1]$. This approach, called \textit{indicator function learning}, does not require any further use of the feasibility threshold $\tau$ in the optimization model, since the predictive models directly encode feasibility.

The function learning approach is particularly useful when we are interested in varying the threshold $\tau$ as a model parameter. Additionally, if the fitting process is expensive and therefore difficult to perform multiple times, learning an indicator function for each potential $\tau$ might be infeasible. In contrast, the indicator function learning approach is necessary when the raw data contains binary labels rather than continuous outcomes, and thus we have no ability to select or vary $\tau$.

\subsection{MIO-representable predictive models}
\label{subsect:MIP_rep}
Our framework is enabled by the ability to embed learned predictive models into an MIO formulation with linear constraints. This is possible for many classes of ML models, ranging from linear models to ensembles, and from support vector machines to neural networks. In this section, we outline the embedding procedure for decision trees, tree ensembles, and neural networks to illustrate the approach. We include additional technical details and formulations for these methods, along with linear regression and support vector machines, in Appendix~\ref{appendix:embedding_ml}.

In all cases, the model has been \textit{pre-trained}; we embed the trained model $\hat{h}(\bm{x},\bm{w})$ into our larger MIO formulation to allow us to constrain or optimize the resultant predicted value. Consequently, the optimization model is not dependent on the complexity of the model training procedure, but solely the size of the final trained model. Without loss of generality, we assume that $y$ is one-dimensional; \textit{i.e.}, we are learning a single model, and this model returns a scalar, not a multi-output vector.

All of the methods below can be used to learn constraints that apply upper or lower bounds to $y$, or to learn $y$ that we incorporate as part of the objective. We present the model embedding procedure for both cases when $\hat{h}(\bm{x},\bm{w})$ is a continuous or a binary predictive model, where relevant. We assume that either regression or classification models can be used to learn feasibility constraints, as described in Section~\ref{subsect:conceptual}. 

\paragraph{Decision Trees.}
Decision trees partition observations into distinct \textit{leaves} through a series of \textit{feature splits}. These algorithms are popular in predictive tasks due to their natural interpretability and ability to capture nonlinear interactions among variables. \cite{Breiman1984} first introduced Classification and Regression Trees (CART), which constructs trees through parallel splits in the feature space. Decision tree algorithms have subsequently been adapted and extended. \cite{Bertsimas2017} propose an alternative decision tree algorithm, Optimal Classification Trees (and Optimal Regression Trees), that improves on the basic decision tree formulation through an optimization framework that approximates globally optimal trees. Optimal trees also support multi-feature splits, referred to as \textit{hyper-plane splits}, that allow for splits on a linear combination of features~\citep{bertsimas2017optimalBOOK}.

A generic decision tree of depth 2 is shown in Figure~\ref{fig:dt_example}. A split at node $i$ is described by an inequality $A_i^\top \bm{x} \leq b_i$. We assume that $A$ can have multiple non-zero elements, in which we have the hyper-plane split setting; if there is only one non-zero element, this creates a parallel (single feature) split. Each terminal node $j$ (\textit{i.e.}, leaf) yields a prediction ($p_j$) for its observations. In the case of regression, the prediction is the average value of the training observations in the leaf, and in binary classification, the prediction is the proportion of leaf members with the feasible class. Each leaf can be described as a polyhedron, namely a set of linear constraints that must be satisfied by all leaf members. For example, for node 3, we define $\mathcal{P}_3 = \left\{x : A_1^\top x \leq b_1, A_2^\top x \leq b_2 \right\}$.


\begin{figure}
    \FIGURE{\includegraphics[scale=1]{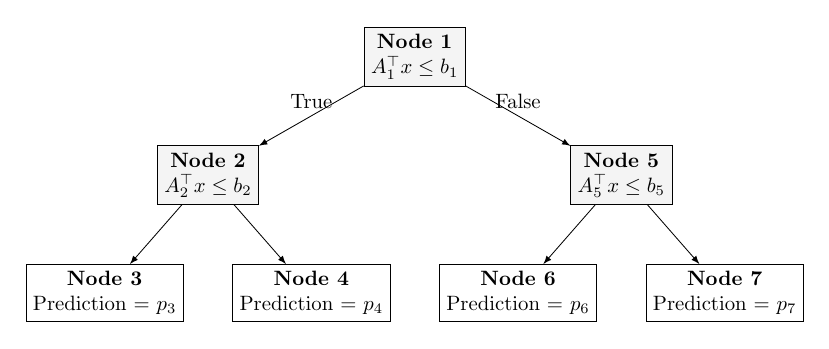}}
    {A decision tree of depth 2 with four terminal nodes (leaves).\label{fig:dt_example}}
    {}
\end{figure}

Suppose that we wish to constrain the predicted value of this tree to be at most $\tau$, a fixed constant. After obtaining the tree in Figure~\ref{fig:dt_example}, we can identify which paths satisfy the desired bound ($p_i \leq \tau$). Suppose that $p_3$ and $p_6$ do satisfy the bound, but $p_4$ and $p_7$ do not. In this case, we can enforce that our solution belongs to $\mathcal{P}_3$ or $\mathcal{P}_6$. This same approach applies if we only have access to two-class data (feasible vs. infeasible); we can directly train a binary classification algorithm and enforce that the solution lies within one of the ``feasible" prediction leaves (determined by a set probability threshold).

If the decision tree provides our only learned constraint, we can decompose the problem into multiple separate MIOs, one per feasible leaf. The conceptual model for the subproblem of leaf $i$ then becomes 
\begin{align*} 
    \min_{\bm{x}} \ & f(\bm{x},\bm{w}) \\
    \mbox{s.t.} \ & \bm{g}(\bm{x},\bm{w}) \leq 0, \\
    & (\bm{x},\bm{w}) \in \mathcal{P}_i,
\end{align*}
where the learned constraints for leaf $i$'s subproblem are implicitly represented by the polyhedron $\mathcal{P}_i$. These subproblems can be solved in parallel, and the minimum across all subproblems is obtained as the optimal solution. Furthermore, if all decision variables $\bm{x}$ are continuous, these subproblems are linear optimization problems (LOs), which can provide substantial computational gains. This is explored further in Appendix~\ref{appendix:dt}. 

In the more general setting where the decision tree forms one of many constraints, or we are interested in varying the $\tau$ limit within the model, we can directly embed the model into a larger MIO. We add binary variables representing each leaf, and set $y$ to the predicted value of the assigned leaf. An observation can only be assigned to a leaf, if it obeys all of its constraints; the structure of the tree guarantees that exactly one path will be fully satisfied, and thus, the leaf assignment is uniquely determined. A solution belonging to $\mathcal{P}_3$ will inherit $y = p_3$. Then, $y$ can be used in a constraint or objective. The full formulation for the embedded decision tree is included in Appendix~\ref{appendix:dt}. This formulation is similar to the proposal in~\citet{Verwer2017}. Both approaches have their own merits: while the Verwer formulation includes fewer constraints in the general case, our formulation is more efficient in the case where the problem can be decomposed into individual subproblems (as described above).

\paragraph{Ensemble Methods.}
Ensemble methods, such as random forests (RF) and gradient-boosting machines (GBM) consist of many decision trees that are aggregated to obtain a single prediction for a given observation. These models can thus be implemented by embedding many ``sub-models''~\citep{Breiman2001}. Suppose we have a forest with $P$ trees. Each tree can be embedded as a single decision tree (see previous paragraph) with the constraints from Appendix~\ref{appendix:dt}, which yields a predicted value $y_i$. 

RF models typically generate predictions by taking the average of the predictions from the individual trees: $$ y = \frac{1}{P}\sum_{i=1}^P y_i. $$
This can then be used as a term in the objective, or constrained by an upper bound as $y \leq \tau$; this can be done equivalently for a lower bound. In the classification setting, the prediction averages the probabilities returned by each model ($y_i \in [0,1]$), which can likewise be constrained or optimized.

Alternatively, we can further leverage the fact that unlike the other model classes, which return a single prediction, the RF model generates $P$ predictions, one per tree. We can impose a violation limit across the individual $P$ estimators as proposed in Section~\ref{subsect:robust_wrapper}. 

In the case of GBM, we have an ensemble of base-learners which are not necessarily decision trees. The model output is then computed as $$ y = \sum_{i=1}^P\beta_i y_i,$$ where $y_i$ is the predicted value of the $i$-th regression model $\hat{h}_i(\bm{x}, \bm{w})$, $\beta_i$ is the weight associated with the prediction. Although trees are typically used as base-learners, in theory we might use any of the MIO-representable predictive models discussed in this section.


\paragraph{Neural Networks.}
We implement multi-layer perceptrons (MLP) with a rectified linear unit (ReLU) activation function, which form an MIO-representable class of neural networks~\citep{Grimstad2019, Anderson2020}. These networks consist of an input layer, $L-2$ hidden layer(s), and an output layer. This nonlinear transformation of the input space over multiple nodes (and layers) using the ReLU operator ($v = \max\{0,x\}$) allows MLPs to capture complex functions that other algorithms cannot adequately encode, making them a powerful class of models.

Critically, the ReLU operator, $v = \max\{0,x\}$, can be encoded using linear constraints, as detailed in Appendix~\ref{appendix:mlp}. The constraints for an MLP network can be generated recursively starting from the input layer, which allows us to embed a trained MLP with an arbitrary number of hidden layers and nodes into an MIO. We refer to Appendix \ref{appendix:mlp} for details on the embedding of regression, binary classification, and multi-class classification MLP variants.

\subsection{Convex hull as trust region}
\label{subsect:convex_hull_as_trust_region}
As the optimal solutions of optimization problems are often at the extremes of the feasible region, this can be problematic for the validity of the trained ML model. Generally speaking the accuracy of a predictive model deteriorates for points that are further away from the data points in $\mathcal{D}$ \citep{goodfellow2015}. To mitigate this problem, we elaborate on the idea proposed by \cite{Biggs2017} to use the convex hull (CH) of the dataset as a trust region to prevent the predictive model from extrapolating.
According to \cite{Ebert2014}, when data is enclosed by a boundary of convex shape, the region inside this boundary is known as an interpolation region. This interpolation region is also referred to as the CH, and by excluding solutions outside the CH, we prevent extrapolation. If $\bm{X} = \{ \bm{\hat{x}}_i \}_{i=1}^N$ is the set of observed input data with $\bm{\hat{x}}_i = (\bm{\bar{x}}_i, \bm{\bar{w}}_i)$, we define the trust region as the CH of this set and denote it by CH($\bm{X}$). Recall that CH($\bm{X}$) is the smallest convex polytope that contains the set of points $\bm{X}$. It is well-known that computing the CH is exponential in time and space with respect to the number of samples and their dimensionality~\cite{Skiena_2008}. However, since the CH is a polytope, explicit expressions for its facets are not necessary. More precisely, CH($\bm{X}$) is represented as
\begin{align}
    \text{CH($\bm{X}$)} = \bigg\{ \bm{x} \bigg| \sum_{i \in \mathcal{I}} \lambda_i \bm{\hat{x}}_i = \bm{x}, \ \sum_{i \in \mathcal{I}} \lambda_i = 1, \ \bm{\lambda} \geq 0
    \bigg\},
\label{eqn:trust_region1}
\end{align}
where $\bm{\lambda} \in \mathbb{R}^N$, and $\mathcal{I} = \{1, \dots, N \}$ is the index set of samples in 
$\bm{X}$.

In situations such as the one shown in Figure \ref{fig:cluster1}, CH($\bm{X}$) includes regions with few or no data points (low-density regions). Blindly using CH($\bm{X}$) in this case can be problematic if the solutions are found in the low-density regions. We therefore advocate the use of a two-step approach. First, clustering is used to identify distinct high-density regions, and then the trust region is represented as the union of the CHs of the individual clusters (Figure \ref{fig:cluster2}).

\begin{figure} 
\centering
\caption{Use of the two-step approach to remove low-density regions.}\label{fig:cluster12}
\begin{subfigure}[t]{0.45\textwidth}
        \centering
        \includegraphics{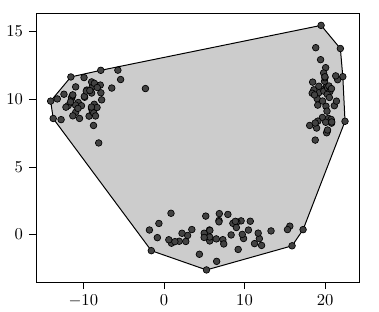}
        \caption{CH($\bm{X}$) with single region.}
        \label{fig:cluster1}
    \end{subfigure}
    \begin{subfigure}[t]{0.45\textwidth}
        \centering
        \includegraphics{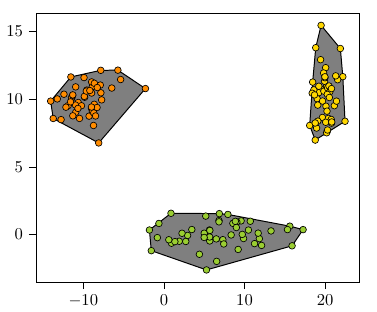}
        \caption{CH($\bm{X}$) with clustered regions.}
        \label{fig:cluster2}
    \end{subfigure}
    {}
\end{figure} 

\noindent We can either solve EM($\bm{w}$) for each  cluster, or embed the union of the $|\mathcal{K}|$ CHs into the MIO given by
\begin{align}
    \bigcup_{k\in\mathcal{K}}\text{CH($\bm{X}_k$)} = \bigg\{ \bm{x}
    \bigg| \sum_{i \in \mathcal{I}_k} 
    \lambda_i \bm{\hat{x}}_i = \bm{x}, \ \sum_{i \in \mathcal{I}_k}  \lambda_i = u_k \ \forall k \in \mathcal{K}, \sum_{k \in \mathcal{K}} u_k = 1, \ \bm{\lambda} \geq 0, \ \bm{u} \in \{0,1\}^{|\mathcal{K}|}
    \bigg\},
\label{eqn:trust_region2}
\end{align}
where $\bm{X}_k \subseteq \bm{X}$ refers to subset of samples in cluster $k \in \mathcal{K}$ with the index set $\mathcal{I}_k \subseteq \mathcal{I}$. The union of CHs requires the binary variables $u_k$ to constrain a feasible solution to be exactly in one of the CHs. More precisely, $u_k=1$ corresponds to the CH of the $k$-th cluster. As we show in Section \ref{subsect:wfp}, solving EM($\bm{w}$) for each  cluster may be done in parallel, which has a positive impact on computation time. We note that both formulations (\ref{eqn:trust_region1}) and (\ref{eqn:trust_region2}) assume that $\bm{\hat{x}}$ is continuous. These formulations can be extended to datasets with binary, categorical and ordinal features. In the case of categorical features, extra constraints on the domain and one-hot encoding are required.


Although the CH can be represented by linear constraints, the number of variables in EM($\bm{w}$) increases with the increase in the dataset size, which may make the optimization process prohibitive when the number of samples becomes too large. We therefore provide a column selection algorithm that selects a small subset of the samples. This algorithm can be directly used in the case of convex optimization problems or embedded as part of a branch and bound algorithm when the optimization problem involves integer variables. Figure~\ref{fig:column_selection} visually demonstrates the procedure; we begin with an arbitrary sample of the full data, and use column selection to iteratively add samples $\bm{\hat{x}}_i$ until no improvement can be found. In Appendix~\ref{appendix:columnselection}, we provide a full description of the approach, as well as a formal lemma which states that in each iteration of column selection, the selected sample from $\bm{X}$ is also a vertex of CH($\bm{X}$). In synthetic experiments, we observe that the algorithm scales well with the dataset size. The computation time required by solving the optimization problem with the algorithm is near-constant and minimally affected by the number of samples in the dataset. The experiments in Appendix~\ref{appendix:columnselection} show optimization with column selection to be significantly faster than a traditional approach, which makes it an ideal choice when dealing with massive datasets.

\begin{figure}
\FIGURE{\includegraphics[scale=0.5]{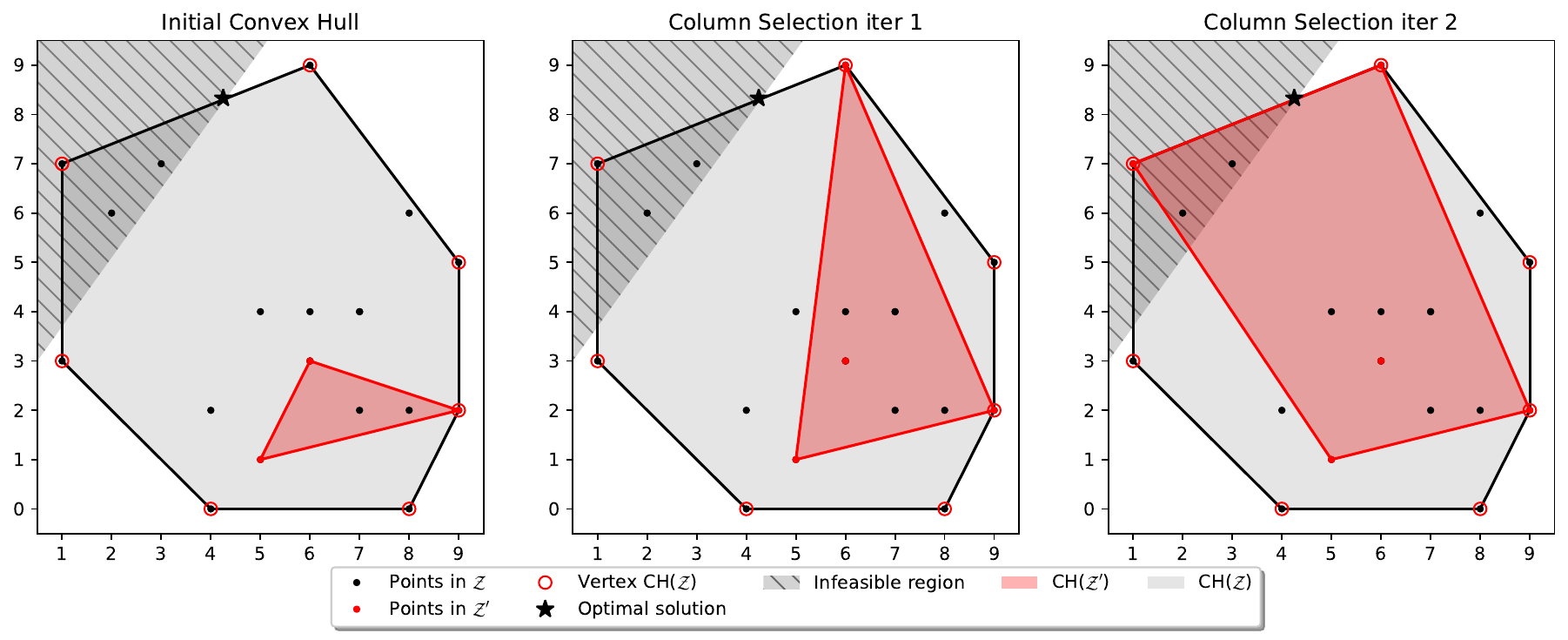}}{
Visualization of the column selection algorithm. Known and learned constraints define the infeasible region. The column selection algorithm starts using only a subset of data points (red filled circles), $\bm{X}^\prime\subseteq\bm{X}$ to define the trust region. In each iteration a vertex of CH($\bm{X}$) is selected (red hollow circle) and included in $\bm{X}^\prime$ until the optimal solution (star) is within the feasible region, namely the convex hull of $\bm{X}^\prime$. Note that with column selection we do not need the complete dataset to obtain the optimal solution, but rather only a subset.\label{fig:column_selection}}
{}
\end{figure}



\section{Uncertainty and Robustness}
\label{sec:robustness}
There are multiple sources of uncertainty, and consequently notions of robustness, that can be considered when embedding a trained machine learning model as a constraint. We define two types of uncertainty in model \eqref{eqn:conceptualmodel}.

\paragraph{Function Uncertainty.} The first source of uncertainty is in the underlying functional form of $\hat{h}$. We do not know the ground truth relationship between $(\bm{x},\bm{w})$ and $y$, and there is potential for model mis-specification. We mitigate this risk through our nonparametric model selection procedure, namely training $\hat{h}$ for a diverse set of methods (\textit{e.g.}, decision tree, regression, neural network) and selecting the final model using a cross-validation procedure.

\paragraph{Parameter Uncertainty.} Even within a single model class, there is uncertainty in the parameter estimates that define $\hat{h}$. Consider the case of linear regression. A regression estimator consists of point estimates of coefficients and an intercept term, but there is uncertainty in the estimates as they are derived from noisy data. We seek to make our model robust by characterizing this uncertainty and optimizing against it. We propose model-wrapper ensemble approaches, which are agnostic to the underlying model. The rest of this section addresses the model-wrapper approaches and a looser formulation of the trust region that prevents the optimal solution from being too conservative when the predictive models have good extrapolation performance.

\subsection{Model wrapper approach}\label{subsect:robust_wrapper}
We begin by describing the model ``wrapper" approach for characterizing uncertainty, in which we work directly with any trained models and their point predictions. Rather than obtaining our estimated outcome from a single trained predictive model, we suppose that we have $P$ estimators. The set of estimators can be obtained by bootstrapping or by training models using entirely different methods. The uncertainty is thus characterized by different realizations of the predicted value from multiple estimators, which effectively form an ensemble. 

We introduce a constraint that at most $\alpha \in [0,1]$ proportion of the $P$ estimators violate the constraint. Let $\hat{h}_1,\ldots,\hat{h}_P$ be the individual estimators. Then $\hat{h}_i(x) \leq \tau$ in at least $1-\alpha P$ of these estimators. This allows for a degree of robustness to individual model predictions by discarding a small number of potential outlier predictions. Formally,
\begin{align}
    \frac{1}{P}\sum_{i=1}^P \mathbb{I} (y_i \leq \tau) \geq 1 - \alpha. \label{eqn:model_wrapper}
\end{align}
Note that $\alpha = 0$ enforces the bound for all estimators, yielding the most conservative estimate, whereas $\alpha = 1$ removes the constraint entirely. Constraint \eqref{eqn:model_wrapper} is MIO-representable:
\begin{align*}
    & y_i \leq \tau + M(1-z_i), \ \ \ i=1,\dots, P \\
    & \frac{1}{P}\sum_{i=1}^P z_i  \geq 1-\alpha,
\end{align*}
where $z_i \in \{0, 1\} \ \forall i=1,\dots,P$, and $M$ is a sufficiently large constant. Appendix~\ref{app:model-wrapper} includes further details on this formulation and special cases. 

The violation limit concept can also be applied to estimators coming from multiple model classes, which allows us to enforce that the constraint is \textit{generally} obeyed when modeled through distinct methods. This provides a measure of robustness to \textit{function uncertainty}.

\subsection{Enlarged convex hull}
The use of the model wrapper approach and the trust region constraints, as defined in (\ref{eqn:trust_region1}), has a direct effect on the feasible region. The better performance of the learned constraints might be balanced out by the (potentially) unnecessary conservatism of the optimal solution. Although we introduced the trust region as a set of constraints to preserve the predictive performance of the fitted constraints, \cite{Balestriero2021} show how in a high-dimensional space the generalization performance of a fitted model is typically obtained extrapolating.
In light of this evidence, we propose an $\epsilon$-CH formulation which builds on (\ref{eqn:trust_region1}), and more generally on (\ref{eqn:trust_region2}). The relaxed formulation of the trust region enables the optimal solution of problem $M(\bm{w})$ to be outside $CH(X)$. 
Formally, we enlarge the trust region such that solutions outside CH($\bm{X}$) are considered feasible if they fall within the hyperball, with radius $\epsilon$, surrounding at least one of the data points in $\bm{X}$, see Figure~\ref{fig:eCH} (left). The $\epsilon$-CH is formulated as follows:
\begin{align}
\text{$\epsilon$-CH($\bm{X}$)} = \bigg\{ (\bm{x},\bm{s}) \bigg| \sum_{i \in \mathcal{I}} \lambda_i \bm{\hat{x}}_i = \bm{x} + \bm{s}, \ \sum_{i \in \mathcal{I}} \lambda_i = 1, \ \bm{\lambda} \geq 0, \ ||\bm{s}||_{p} \leq \epsilon
    \bigg\},
\label{eqn:epstrust_region1} 
\end{align}
with $\bm{s} \in \mathbb{R}^n$, and $p$ set equal to 1,2 or $\infty$ to preserve the complexity of the optimization problem. Figure~\ref{fig:eCH} (right) shows the extended region obtained with the $\epsilon$-CH. The choice of $\epsilon$ is pivotal in the trade-off between the performance of the learned constraints and the conservatism of the optimal solution. In the next section, we demonstrate how an increase in $\epsilon$ affects both the performance of the embedded predictive models and the objective function value.

\begin{figure*}[t!]
    \centering
    \caption{Trust region enlarged using an hyperball with radius $\epsilon$ around each sample in CH(X).}\label{fig:eCH}
    \begin{subfigure}[t]{0.5\textwidth}
        \centering
        \includegraphics[height=3.2in]{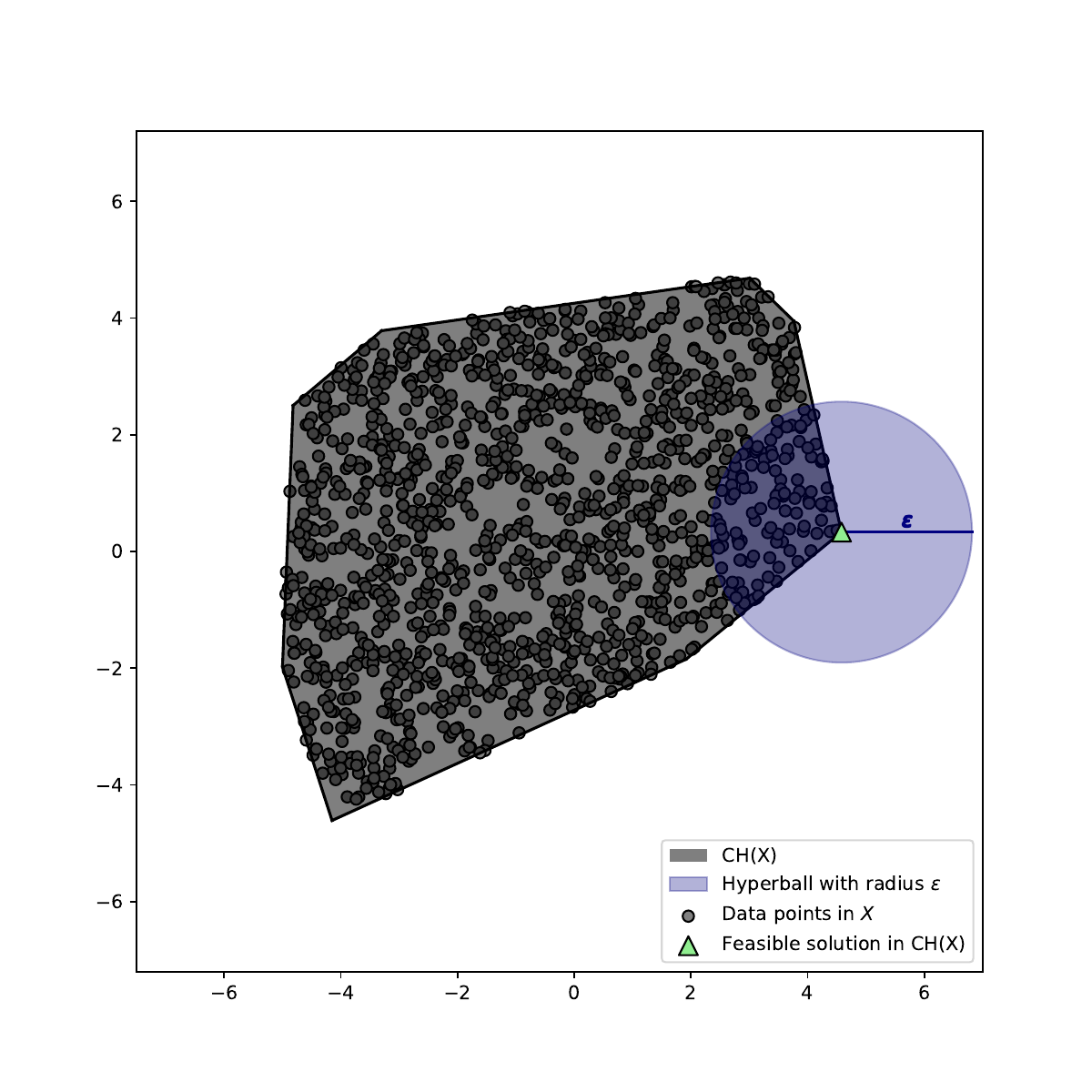}
    \end{subfigure}%
    ~ 
    \begin{subfigure}[t]{0.5\textwidth}
        \centering
        \includegraphics[height=3.2in]{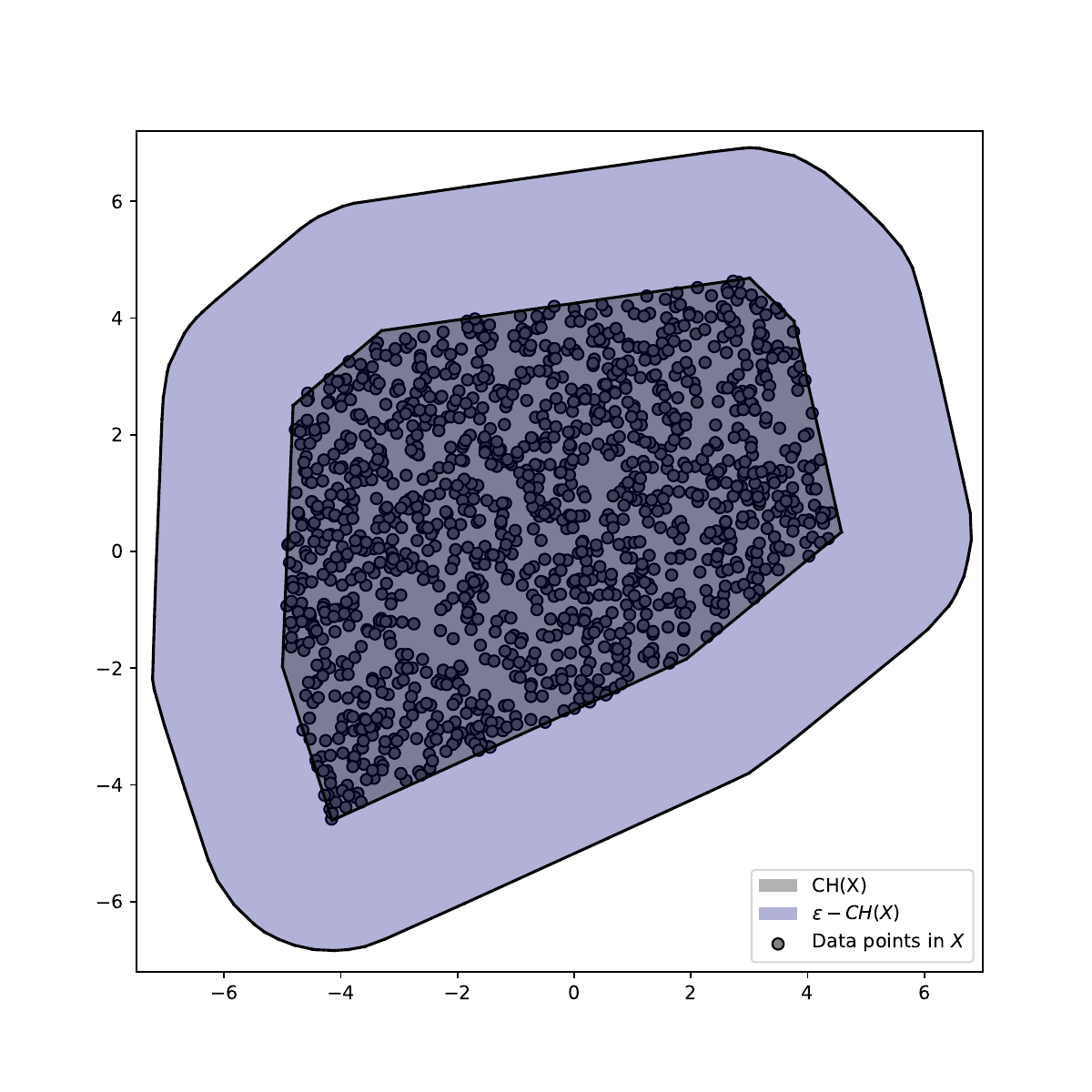}
    \end{subfigure}
\end{figure*}

\section{Case study: a palatable food basket for the World Food Programme}\label{subsect:wfp}
In this case study, we use a simplified version of the model proposed by \cite{peters2021nutritious}, which seeks to optimize humanitarian food aid. Its extended version aims to provide the World Food Programme (WFP) with a decision-making tool for long-term recovery operations, which simultaneously optimizes the food basket to be delivered, the sourcing plan, the delivery plan, and the transfer modality of a month-long food supply. The model proposed by \cite{peters2021nutritious} enforces that the food baskets address the nutrient gap and are palatable. To guarantee a certain level of palatability, the authors use a number of “unwritten rules” that have been defined in collaboration with nutrition experts. In this case study, we take a step further by inferring palatability constraints directly from data that reflects local people's opinions.
We use the specific case of Syria for this example. The conceptual model presents an LO structure with only the food palatability constraint to be learned. Data on palatability is generated through a simulator, but the procedure would remain unchanged if data were collected in the field, for example through surveys. The structure of this problem, which is an LO and involves only one learned constraint, allows the following analyses: (1) the effect of the trust-region on the optimal solution, and (2) the effect of clustering on the computation time and the optimal objective value. Additionally, the use of simulated data provides us with a ground truth to use in evaluating the quality of the prescriptions.
\subsection{Conceptual model}
\noindent
The optimization model is a combination of a capacitated, multi-commodity network flow model, and a diet model with constraints for nutrition levels and food basket palatability. 

The sets used to define the constraints and the objective function are displayed in Table \ref{tab:WFP_sets}. We have three different sets of nodes, and the set of commodities contains all the foods available for procurement during the food aid operation.
\begin{table}[htbp]
\TABLE{Definition of the sets used in the WFP model. \label{tab:WFP_sets}}{
\begin{tabular}{ll}
\toprule
\multicolumn{2}{c}{\textbf{Sets}}                          \\ \midrule
$\mathcal{N_S}$ & Set of source nodes                      \\
$\mathcal{N_T}$ & Set of transshipment nodes               \\
$\mathcal{N_D}$ & Set of delivery nodes                    \\
$\mathcal{K}$   & Set of commodities ($k \in \mathcal{K}$) \\
$\mathcal{L}$   & Set of nutrients ($l \in \mathcal{L}$)   \\ \bottomrule
\end{tabular}
}{}
\end{table}

The parameters used in the model are displayed in Table~\ref{tab:WFP_params}. The costs used in the objective function concern transportation ($p^T$) and procurement ($p^P$). The amount of food to deliver depends on the demand ($d$) and the number of feeding days ($days$). The nutritional requirements ($nutreq$) and nutritional values ($nutrval$) are detailed in Appendix~\ref{appendix:wfp}. The parameter $\gamma$ is needed to convert the metric tons used in the supply chain constraints to the grams used in the nutritional constraints. The parameter $t$ is used as a lower bound on the food basket palatability. The values of these parameters are based on those used by \cite{peters2021nutritious}.
\begin{table}[htbp]
\TABLE{Definition of the parameters used in the WFP model. \label{tab:WFP_params}}{
\begin{tabular}{ll}
\toprule
\multicolumn{2}{c}{\textbf{Parameters}}  \\ \midrule
$\gamma$      & Conversion rate from metric tons (mt) to grams (g) \\
$d_i$ & Number of beneficiaries at delivery point $i \in \mathcal{N_D}$ \\
$days$        & Number of feeding days \\
$nutreq_{l}$       & Nutritional requirement for nutrient $l \in \mathcal{L}$ (grams/person/day) \\
$nutval_{kl}$      & Nutritional value for nutrient $l \in \mathcal{L}$ per gram of commodity $k \in \mathcal{K}$ \\
$p_{ik}^P$    & Procurement cost (in \$ / mt) of commodity $k$ from source $i \in \mathcal{N_S}$                                                                  \\
$p_{ijk}^T$   & Transportation cost (in \$ / mt) of commodity $k$ from node $i \in \mathcal{N_S}\cup\mathcal{N_T}$ to node $j \in \mathcal{N_T}\cup\mathcal{N_D}$ \\
$t$      & Palatability lower bound \\
\bottomrule
\end{tabular}
}{}
\end{table}

The decision variables are shown in Table~\ref{tab:WFP_vars}. The flow variables $F_{ijk}$ are defined as the metric tons of a commodity $k$ transported from node $i$ to $j$. The variable $x_k$ represents the average daily ration per beneficiary for commodity $k$. The variable $y$ refers to the palatability of the food basket.

\begin{table}[htbp]
\TABLE{Definition of the variables used in the WFP model. \label{tab:WFP_vars}}{
\begin{tabular}{ll}
\toprule
\multicolumn{2}{c}{\textbf{Variables}}                                                               \\ \midrule
$F_{ijk}$ & Metric tons of commodity $k \in \mathcal{K}$ transported between node $i$ and node $j$ \\
$x_{k}$   & Grams of commodity $k \in \mathcal{K}$ in the food basket                         \\
$y$       & Food basket palatability                                                                 \\ \bottomrule
\end{tabular}
}{}
\end{table}

The full model formulation is as follows:
\begin{subequations}
\begin{align}
\min_{\bm{x}, y, \bm{F}} \ & \sum_{i \in \mathcal{N_S}} \sum_{j\in \mathcal{N_T \cup N_D}} \sum_{k \in \mathcal{K}} p_{ik}^PF_{ijk} + \sum_{i \in \mathcal{N_S \cup N_T}} \sum_{j \in \mathcal{N_T \cup N_D}} \sum_{k \in \mathcal{K}} p_{ijk}^TF_{ijk} & \label{eqn:WFPconstr0} \\
\mbox{s.t.} \ & \sum_{j \in \mathcal{N_T}} F_{ijk} = \sum_{j \in \mathcal{N_T}} F_{jik}, \ \ \ i \in \mathcal{N_T}, \  k \in \mathcal{K}, & \label{eqn:WFPconstr1}\\
& \sum_{j \in \mathcal{N_S \cup N_T}} \gamma F_{jik} = d_ix_kdays, \ \ \  i \in \mathcal{N_D}, \ k \in \mathcal{K}, & \label{eqn:WFPconstr2}\\
& \sum_{k \in \mathcal{K}} Nutval_{kl} x_{k} \geq Nutreq_{l}, \ \ \  l\in\mathcal{L}, & \label{eqn:WFPconstr4}\\
& x_{salt} = 5, & \label{eqn:WFPconstr8}\\
& x_{sugar} = 20, & \label{eqn:WFPconstr9}\\
& y \geq t, & \label{eqn:WFPconstr5}\\
& y = \hat{h}(\bm{x}), & \label{eqn:WFPconstr6}\\
& F_{ijk}, x_{k} \geq 0, \ \ \ i,j \in  \mathcal{N}, \ k \in \mathcal{K}. \label{eqn:WFPconstr7}
\end{align}
\end{subequations}
The objective function consists of two components, procurement costs and transportation costs. Constraints (\ref{eqn:WFPconstr1}) are used to balance the network flow, namely to ensure that the inflow and the outflow of a commodity are equal for each transhipment node. Constraints (\ref{eqn:WFPconstr2}) state that flow into a delivery node has to be equal to its demand, which is defined by the number of beneficiaries times the daily ration for commodity $k$ times the feeding days. Constraints (\ref{eqn:WFPconstr4}) guarantee an optimal solution that meets the nutrition requirements. Constraints (\ref{eqn:WFPconstr8}) and (\ref{eqn:WFPconstr9}) force the amount of salt and sugar to be 5 grams and 20 grams respectively. Constraint (\ref{eqn:WFPconstr5}) requires the food basket palatability ($y$), defined by means of a predictive model (\ref{eqn:WFPconstr6}), to be greater than a threshold ($t$). Lastly, non-negativity constraints (\ref{eqn:WFPconstr7}) are added for all commodity flows and commodity rations.

\subsection{Dataset and predictive models}
To evaluate the ability of our framework to learn and implement the palatability constraints, we use a simulator to generate diets with varying palatabilities. Each sample is defined by 25 features representing the amount (in grams) of all commodities that make up the food basket. We then use a ground truth function to assign each food basket a palatability between 0 and 1, where 1 corresponds to a perfectly palatable basket, and 0 to an inedible basket. This function is based on suggestions provided by WFP experts {\color{black} and complete details are outlined in Appendix~\ref{app:palatability}}. The data is then balanced to ensure that a wide variety of palatability scores are represented in the dataset. The final data used to learn the palatability constraint consists of 121,589 samples. Two examples of daily food baskets and their respective palatability scores are shown in Table~\ref{tab:foodbaskets}. In this case study, we use a palatability lower bound (t) of 0.5 for our learned constraint.

The next step of the framework involves training and choosing the predictive model that best approximates the unknown constraint. The predictive models used to learn the palatability constraints are those discussed in Section \ref{sec:methodology}, namely LR, SVM, CART, RF, GBM with decision trees as base-learners, and MLP with ReLU activation function.

\begin{table}[]
\TABLE{Two examples of daily food baskets. \label{tab:foodbaskets}}{
\begin{tabular}{@{}lcc@{}}
\toprule
\textbf{Commodity} & \textbf{Basket 1 Amount (g)} & \textbf{Basket 2 Amount (g)} \\ \midrule
DSM & 31.9 & 33.9 \\
Chickpeas & -- & 75.7 \\
Lentils & 41 & -- \\
Maize meal & 48.9 & -- \\
Meat & -- & 17.2 \\
Oil & 22 & 28.6 \\
Salt & 5 & 5 \\
Sugar & 20 & 20 \\
Wheat & 384.2 & 131.2 \\
Wheat flour & -- & 261.3 \\
WSB & 67.3 & 59.8 \\ \midrule
\textbf{Palatability Score} & 0.436 & 0.741 \\ \bottomrule
\end{tabular}
}{DSM=dried skim milk, WSB=wheat soya blend.}
\end{table}

\subsection{ Optimization results}
The experiments are executed using \texttt{OptiCL} jointly with Gurobi v9.1 \citep{gurobi} as the optimization solver. Table \ref{tab:WFP_results} reports the performances of the predictive models evaluated both for the validation set and for the prescriptions after being embedded into the optimization model. The table also compares the performance of the optimization with and without the trust region. The column ``Validation MSE" gives the Mean Squared Error (MSE) of each model obtained in cross-validation during model selection. While all scores in this column are desirably low, the MLP model significantly achieves the lowest error during this validation phase. The column ``MSE" gives the MSE of the predictive models once embedded into the optimization problem to evaluate how well the predictions for the optimal solutions match their true palatabilities (computed using the simulator). It is found using 100 optimal solutions of the optimization model generated with different cost vectors. The MLP model exhibits the best performance ($0.055$) in this context, showing its ability to model the palatability constraint better than all other methods.

\begin{table}[ht]
\TABLE
{Predictive models performances for the validation set (``Validation MSE"), and for the prescriptions after being embedded into the optimization model with (``MSE-TR") and without the trust region (``MSE"). The last two columns show the average computation time in seconds and its standard deviation (SD) required to solve the optimization model with (``Time-TR") and without the trust region (``Time").}
{\begin{tabular}{@{}lccccc@{}}
\toprule
\textbf{Model} & \textbf{Validation MSE} & \textbf{MSE} & \textbf{MSE-TR} & \textbf{Time (SD)} & \textbf{Time-TR (SD)}\\ \midrule
LR    & 0.046      & 0.256       & 0.042       & 0.003 (0.0008)  & 1.813 (0.204)  \\
SVM   & 0.019      & 0.226       & 0.027       & 0.003 (0.0006)  & 1.786 (0.208)  \\
CART  & 0.014      & 0.273       & 0.059       & 0.012 (0.0030)  & 7.495 (5.869)  \\
RF    & 0.018      & 0.252       & 0.025       & 0.248 (0.1050)  & 30.128 (13.917)   \\
GBM   & 0.006      & 0.250       & 0.017       & 0.513 (0.4562)  & 60.032 (41.685)   \\
MLP    & 0.001      & 0.055       & 0.001       & 14.905 (41.764)  & 28.405 (23.339)    \\ \bottomrule
\end{tabular}}
{Runtimes reported using an Intel i7-8665U 1.9 GHz CPU, 16 GB RAM (Windows 10 environment).\label{tab:WFP_results}}
\end{table}

\paragraph{Benefit of trust region.}
Table~\ref{tab:WFP_results} shows that when the trust region is used (``MSE-TR"), the MSEs obtained by all models are now much closer to the results from the validation phase. This shows the benefit of using the trust region as discussed in Section \ref{subsect:convex_hull_as_trust_region} to prevent extrapolation. With the trust region included, the MLP model also exhibits the lowest MSE ($0.001$). The improved performance seen with the inclusion of the trust region does come at the expense of computation speed. The column ``Time-TR" shows the average computation time in seconds and its standard deviation (SD) with trust region constraints included. In all cases, the computation time has clearly increased when compared against the computation time required without the trust region (column ``Time"). This is however acceptable, as significantly more accurate results are obtained with the trust region.

\paragraph{Benefit of clustering.}
The large dataset used in this case study makes the use of the trust region expensive in terms of time required to solve the final optimization model. While the column selection algorithm described in Section~\ref{subsect:convex_hull_as_trust_region} is ideal for significantly reducing the computation time, optimization models that require binary variables, either for embedding an ML model or to represent decision variables, would require column selection to be combined with a branch and bound algorithm. However, in this more general MIO case, it is possible to divide the dataset into clusters and solve in parallel an MIO for each cluster. By using parallelization, the total solution time can be expected to be equal to the longest time required to solve any single cluster's MIO. Contrary to column selection, the use of clusters can result in more conservative solutions; the trust region gets smaller with more clusters and prevents the model from finding solutions that are convex combinations of members of different clusters. However, as described in Section~\ref{subsect:convex_hull_as_trust_region}, solutions that lie between clusters may in fact reside in low-density areas of the feature space that should not be included in the trust region. In this sense, the loss in the objective value might actually coincide with more trustable solutions.

Figure \ref{fig:time_vs_obj} shows the effect of clusters in solving the model (\ref{eqn:WFPconstr0}-\ref{eqn:WFPconstr7}) with GBM as the predictive model used to learn the palatability constraint. K-means is used to partition the dataset into $K$ clusters, and the reported values are averaged over 100 iterations. In the left graph, we report the maximum runtime distribution across clusters needed to solve the different MIOs in parallel. In the right graph, we have the distributions of optimality gap, \textit{i.e.}, the relative difference between the optimal solution obtained with clusters compared to the solution obtained with no clustering. In this case study, the use of clusters significantly decreases the runtime (89.2\% speed up with $K=50$) while still obtaining near-optimal solutions (less then $0.25\%$ average gap with $K=50$). We observe that the trends are not necessarily monotonic in $K$. It is possible that a certain choice of $K$ may lead to a suboptimal solution, whereas a larger value of $K$ may preserve the optimal solution as the convex combination of points within a single cluster.

\begin{figure}[ht]
    \FIGURE
    {\includegraphics[width=\textwidth]{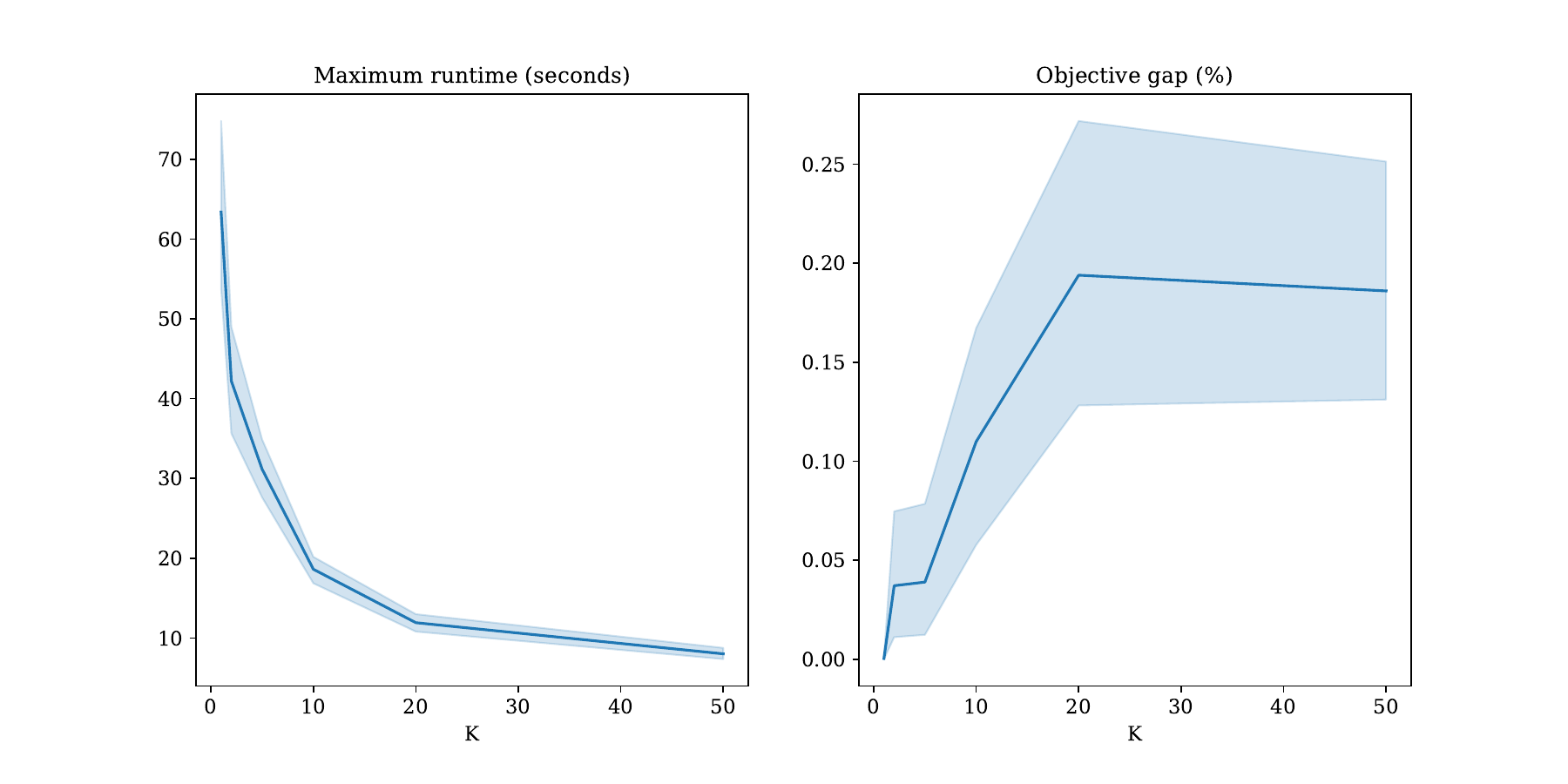}}
    {Effect of the number of clusters (K) on the computation time and the optimality gap across clusters, with bootstrapped 95\% confidence intervals.\label{fig:time_vs_obj}}
    {}
\end{figure}

\subsection{Robustness results} 
In these experiments, we assess the performance of the nominal and robust models. We consider three dimensions of performance: (1) true constraint satisfaction, (2) objective function value, and (3) runtime. The synthetic data used in this case study allows us to evaluate true palatability and constraint satisfaction as these parameters vary. This is the primary goal of the model wrapper ensemble approach, to improve feasibility and make solutions that are robust to any single learned estimator. 

We hypothesize that as our models become more conservative, we will more reliably satisfy the desired palatability constraint with some toll on the objective function. Additionally, embedding multiple models or characterizing uncertainty sets introduces computational complexity over a single nominal model. In this section, we compare the trade-offs in these metrics as we consider different notions of robustness and vary our conservativeness. We note that we are able to evaluate whether the true palatability meets the constraint threshold since palatability is defined through a known function. As with the experiments above, we solve the palatability problem with 100 different realizations of the cost vector and average the results.

The results below explore the effect of the $\alpha$ (violation limit) on cost and palatability in the WFP case study. Additional results on runtime, and experiments with varied estimators ($P$), are included in Appendix~\ref{appendix:robustness}. As the results demonstrate, the robustness parameters yield solutions that vary in their conservativeness and runtime. There is not a single set of optimal parameters. Rather, it is highly dependent on the use case, including factors like the stakes of the decision and the allowable turnaround time to generate solutions.

\paragraph{Multiple embedded models.} We first consider the impact of the model wrapper approach in the WFP problem. We compare different ways of embedding the palatability constraint, both using multiple estimators of a single model class and an ensemble containing multiple model classes. We run the experiments on a random sample of 1000 observations in the original WFP dataset. Within a single model class, we vary the number of estimators ($P \in [2,5,10,25]$) and the violation limit ($\alpha \in [0,0.1,0.2,0.5]$, or applying a mean constraint). Each estimator is obtained using a bootstrap sample (proportion = 0.5) of the underlying data. We compute metrics (1-3) for each variant to compare the tradeoffs in palatability (constraint satisfaction) and cost (objective function value). 

Figure~\ref{fig:cart_comparison} presents the results for a decision tree with $P=25$ and palatability threshold ($\tau$) equal to 0.5. The left figure shows the trade off between palatability and the objective as the violation limit ($\alpha$) varies. As expected, improvements in palatability (when $\alpha$ decreases) lead to increases in the total cost. However, we observe that a violation limit of 0.0 (vs. 0.5) leads to an 11.3\% improvement in real palatability (20.8\% improvement in predicted palatability), with a relatively modest 2.5\% increase in cost. The center and right figure show how palatability and violations vary with $\alpha$. Palatability increases and violations decease with lower $\alpha$. Both the violation rate (proportion of iterations with real palatability $< 0.5$) and violation margin (average distance to palatability threshold in cases where there is a violation) decrease with lower $\alpha$. This experiment demonstrates how the $\alpha$ parameter effectively controls the model's robustness as measured by constraint satisfaction. The approach has the advantage of parameterizing the violation limit, allowing us to explicitly control the model's conservativeness and evaluate constraint-objective tradeoffs.

\begin{figure}[ht]
    \FIGURE
    {\includegraphics[width=\linewidth]{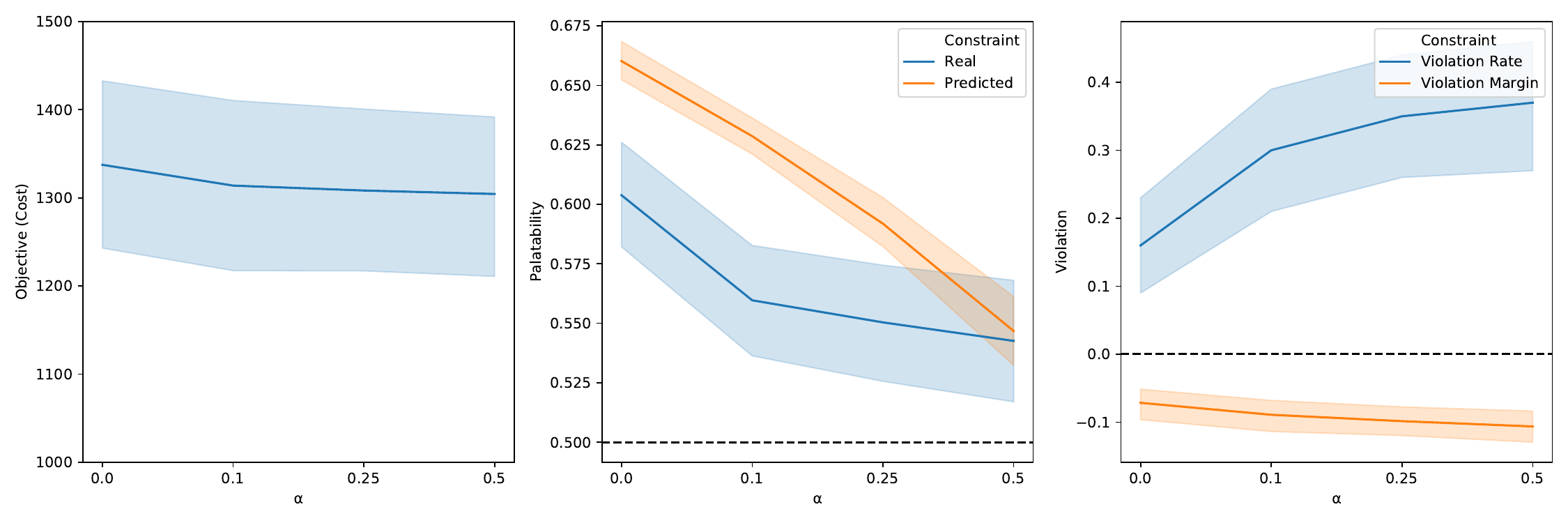}}
    {Comparison of CART models on objective function and constraint satisfaction.
    \label{fig:cart_comparison}}
    {}
\end{figure}

Appendix~\ref{appendix:robustness} reports further results for other model classes {\color{black} as well as runtime experiments}. 

\paragraph{Enlarged trust region.} In order to evaluate the effects of the enlarged trust region on the optimal solution, we use a simplified version of problem (\ref{eqn:WFPconstr0}-\ref{eqn:WFPconstr7}) where the only constraints are on the predictive model embedding, the palatability lower bound, and the $\epsilon$-CH. In Figure~\ref{fig:enlarged_TR}, we show how the objective function value and true palatability score vary according to different values of $\epsilon \in [0, 0.8]$. The results are obtained by averaging over 200 iterations with randomly generated cost vectors and using a decision tree as a predictive model to represent the palatability outcome. As expected, the objective value improves as $\epsilon$ increases. More interesting is the true palatability score which stays around the imposed lower bound of 0.5 for values of $\epsilon$ smaller than 0.25. This means that the predictive model is able to generalize even outside the CH as long as the optimal solution is not too far from it.

\begin{figure}[ht]
\FIGURE
{\includegraphics[width=0.7\textwidth]{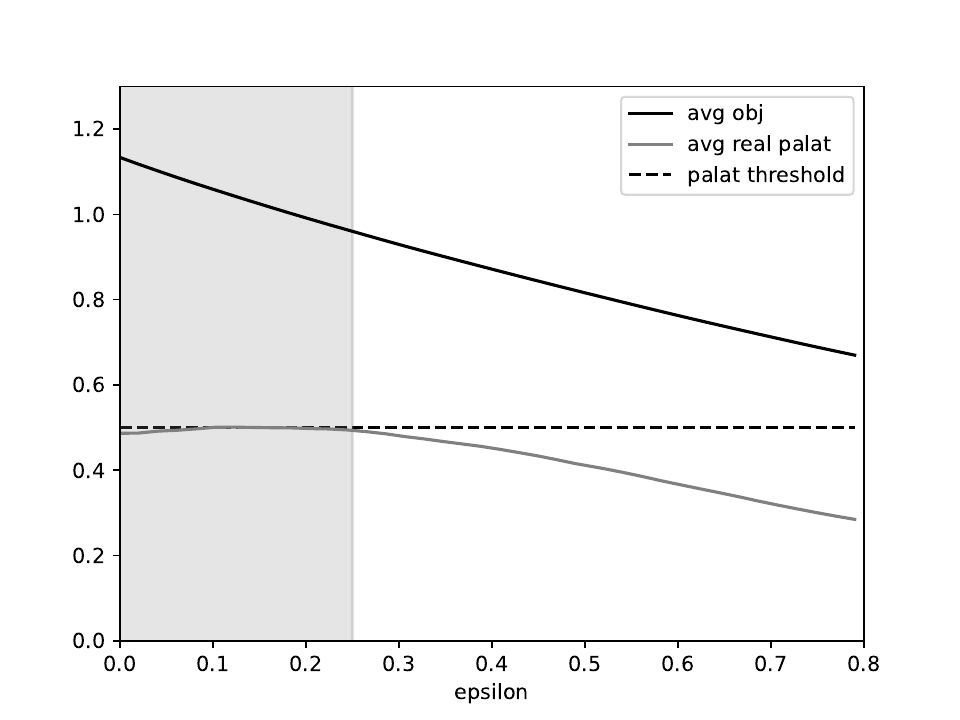}}
{Effect of the $\epsilon$-CH on the objective value and the predictive model performance with respect to the optimal solution. The values are obtained as an average of 200 iterations.\label{fig:enlarged_TR}}
{}
\end{figure}


\section{Case study: chemotherapy regimen design}\label{subsect:gastric}
In this case study, we extend the work of \citet{Bertsimas2016} in the design of chemotherapy regimens for advanced gastric cancer. Late stage gastric cancer has a poor prognosis with limited treatment options~\citep{Yang2011}. This has motivated significant research interest and clinical trials~\citep{nci_gi}. In \cite{Bertsimas2016}, the authors pose the question of algorithmically identifying promising chemotherapy regimens for new clinical trials based on existing trial results. They construct a database of clinical trial treatment arms which includes cohort and study characteristics, the prescribed chemotherapy regimen, and various outcomes. Given a new study cohort and study characteristics, they optimize a chemotherapy regimen to maximize the cohort's survival subject to a constraint on overall toxicity. The original work uses linear regression models to predict survival and toxicity, and it constrains a single toxicity measure. In this work we leverage a richer class of ML methods and more granular outcome measures. This offers benefits through higher performing predictive models and more clinically-relevant constraints.

Chemotherapy regimens are particularly challenging to optimize, since they involve multiple drugs given at potentially varying dosages, and they present risks for multiple adverse events that must be managed. This example highlights the generalizability of our framework to complex domains with multiple decisions and learned functions. The treatment variables in this problem consist of both binary and continuous elements, which are easily incorporated through our use of MIO. We have several learned constraints which must be simultaneously satisfied, and we also learn the objective function directly as a predictive model. 


\subsection{Conceptual model}\label{subsubsect:gastric:cm}
The use of clinical trial data forces us to consider each cohort as an observation, rather than an individual, since only aggregate measures are available. Thus, our model optimizes a cohort's treatment. The contextual variables ($\bm{w}$) consist of various cohort and study summary variables. The inclusion of fixed, \textit{i.e.}, non-optimization, features allows us to account for differences in baseline health status and risk across study cohorts. These features are included in the predictive models but then are fixed in the optimization model to reflect the group for whom we are generating a prescription. We assume that there are no unobserved confounding variables in this prescriptive setting.

The treatment variables ($\bm{x}$) encode a chemotherapy regimen. A regimen is defined by a set of drugs, each with an administration schedule of potentially varied dosages throughout a chemotherapy cycle. We characterize a regimen by drug indicators and each drug's average daily dose and maximum instantaneous dose in the cycle:
\begin{align*}
    & \bm{x}_b^d = \mathbb{I}(\text{drug $d$ is administered}), \\ 
    & \bm{x}_a^d = \text{average daily dose of drug $d$}, \\
    & \bm{x}_i^d = \text{maximum instantaneous dose of drug $d$}.
\end{align*}
This allows us to differentiate between low-intensity, high-frequency and high-intensity, low-frequency dosing strategies. The outcomes of interest ($\bm{y}$) consist of overall survival, to be included as the objective ($y_{OS}$), and various toxicities, to be included as constraints ($y_i, \ i \in \mathcal{Y}_C$).

To determine the optimal chemotherapy regimen $\bm{x}$ for a new study cohort with characteristics $\bm{w}$, we formulate the following MIO:
\begin{align*}
    \min_{\bm{x},\bm{y}} \ & y_{OS} & \\
    \mbox{s.t.} \ & y_i \leq \tau_i, &  {i \in \mathcal{Y}_C}, \\ 
    & y_i = \hat{h}_i(\bm{x},\bm{w}), &  i \in \mathcal{Y}_C, \\
    & y_{OS} = \hat{h}_{OS}(\bm{x},\bm{w}), \\
    & \sum_{d} \bm{x}_b^d \leq 3, \\ 
    & \bm{x}_b \in \{0,1\}^d, \\
     & \bm{x} \in \mathcal{X}(\bm{w}).
\end{align*}
    In this case study, we learn the full objective. However, this model could easily incorporate 
    deterministic components to optimize as additional weighted terms in the objective. 
    We include one domain-driven constraint, enforcing a maximum regimen combination of three drugs. 

The trust region, $\mathcal{X}(\bm{w})$, plays two crucial roles in the formulation. First, it ensures that the predictive models are applied within their valid bounds and not inappropriately extrapolated. It also naturally enforces a notion of ``clinically reasonable" treatments. It prevents drugs from being prescribed at doses outside of previously observed bounds, and it requires that the drug combination must have been previously seen (although potentially in different doses). It is nontrivial to explicitly characterize what constitutes a realistic treatment, and the convex hull provides a data-driven solution that integrates directly into the model framework. Furthermore, the convex hull implicitly enforces logical constraints between the different dimensions of $\bm{x}$. For example, a drug's average and instantaneous dose must be 0, if the drug's binary indicator is set to 0: this does not need to be explicitly included as a constraint, since this is true for all observed treatment regimens. The only explicit constraint required here is that the indicator variables $\bm{x}_b$ are binary.

\subsection{Dataset}\label{subsubsect:gastric:data}
Our data consists of 495 clinical trial arms from 1979-2012~\citep{Bertsimas2016}. We consider nine contextual variables, including the average patient age and breakdown of primary cancer site. There are 28 unique drugs that appear in multiple arms of the training set, yielding 84 decision variables. We include several ``dose-limiting toxicities" (DLTs) for our constraint set: Grade 3/4 constitutional toxicity, gastrointestinal toxicity, and infection, as well as Grade 4 blood toxicity. As the name suggests, these are chemotherapy side effects that are severe enough to affect the course of treatment. We also consider incidence of any dose-limiting toxicity (``Any DLT"), which aggregates over a superset of these DLTs.

We apply a temporal split, training the predictive models on trial arms through 2008 and generating prescriptions for the trial arms in 2009-2012. The final training set consists of 320 observations, and the final testing set consists of 96 observations. The full feature set, inclusion criteria, and data processing details are included in Appendix~\ref{appendix:gastric:data}. 

To define the trust region, we take the convex hull of the treatment variables ($\bm{x}$) on the training set. This aligns with the temporal split setting, in which we are generating prescriptions going forward based on an existing set of past treatment decisions. In general it is preferable to define the convex hull with respect to both $\bm{x}$ and $\bm{w}$ as discussed in Appendix~\ref{appendix:def_convex_hull}, but this does not apply well with a temporal split. Our data includes the study year as a feature to incorporate temporal effects, and so our test set observations will definitionally fall outside of the convex hull defined by the observed $(\bm{x},\bm{w})$ in our training set.

\subsection{Predictive models}\label{subsubsect:gastric:predictive}
Several ML models are trained for each outcome of interest using cross-validation for parameter tuning, and the best model is selected based on the validation criterion. We employ function learning for all toxicities, directly predicting the toxicity incidence and applying an upper bound threshold within the optimization model. 

Based on the model selection procedure, overall DLT, gastrointestinal toxicity, and overall survival are predicted using GBM models. Blood toxicity and infection are predicted using linear models, and constitutional toxicity is predicted with a RF model. This demonstrates the advantage of learning with multiple model classes; no single method dominates in predictive performance. A complete comparison of the considered models is included in Appendix~\ref{appendix:gastric:predictive}.

\subsection{Evaluation framework}
We generate prescriptions using the optimization model outlined in Section~\ref{subsubsect:gastric:cm}, with the embedded model choices specified in Section~\ref{subsubsect:gastric:predictive}. In order to evaluate the quality of our prescriptions, we must estimate the outcomes under various treatment alternatives. This evaluation task is notoriously challenging due to the lack of counterfactuals. In particular, we only know the true outcomes for observed cohort-treatment pairs and do not have information on potential unobserved combinations. We propose an evaluation scheme that leverages a ``ground truth" ensemble (GT ensemble). We train several ML models using all data from the study. These models are not embedded in an MIO model, so we are able to consider a broader set of methods in the ensemble. We then predict each outcome by averaging across all models in the ensemble. This approach allows us to capture the maximal knowledge scenario. Furthermore, such a ``consensus" approach of combining ML models has been shown to improve predictive performance and is more robust to individual model error~\citep{Bertsimas2021a_covidpresc}. The full details of the ensemble models and their predictive performances are included in Appendix~\ref{appendix:gastric:evaluation}. 


\subsection{Optimization results}\label{subsubsect:gastric:optimization_results}
We evaluate our model in multiple ways. We first consider the performance of our prescriptions against observed (given) treatments. We then explore the impact of learning multiple sub-constraints rather than a single aggregate toxicity constraint. All optimization models have the following shared parameters: toxicity upper bound of 0.6 quantile (as observed in training data) and maximum violation of 25\% for RF models. We report results for all test set observations with a feasible solution. It is possible that an observation has no feasible solution, implying that there is not a suitable drug combination lying within the convex hull for this cohort based on the toxicity requirements. These cases could be further investigated through a sensitivity analysis by relaxing the toxicity constraints or enlarging the trust region. With clinical guidance, one could evaluate the modifications required to make the solution feasible and the clinical appropriateness of such relaxations.

Table~\ref{tab:gastric:gt} reports the predicted outcomes under two constraint approaches: (1) constraining each toxicity separately (``All Constraints"), and (2) constraining a single aggregate toxicity measure (``DLT Only"). For each cohort in the test set, we generate predictions for all outcomes of interest under both prescription schemes and compute the relative change of our prescribed outcome from the given outcome predictions.

\paragraph{Benefit of prescriptive scheme.}
We begin by evaluating our proposed prescriptive scheme (``All Constraints") against the observed actual treatments. For example, under the GT ensemble scheme, 84.7\% of cohorts satisfied the overall DLT constraint under the given treatment, compared to 94.1\% under the proposed treatment. This yields an improvement of 11.10\%. We obtain a significant improvement in survival (11.40\%) while also improving toxicity limit satisfaction across all individual toxicities. Using the GT ensemble, we see toxicity satisfaction improvements between 1.3\%-25.0\%. We note that since toxicity violations are reported using the average incidence for each cohort, and the constraint limits are toxicity-specific, it is possible for a single DLT's incidence to be over the allowable limit while the overall ``Any DLT" rate is not.

\begin{table}[ht]
    \TABLE
    {Comparison of outcomes under given treatment regimen, regimen prescribed when only constraining the aggregate toxicity, and regimen prescribed under our full model.}
    {\begin{tabular}{@{}lccccc@{}}
\toprule
                 & \textbf{}  & \multicolumn{2}{c}{\textbf{All Constraints}}   & \multicolumn{2}{c}{\textbf{DLT Only}}  \\ \midrule
                 & \textbf{Given (SD)} & \textbf{Prescribed (SD)} & \textbf{\% Change} & \textbf{Prescribed (SD)} & \textbf{\% Change} \\ \midrule
Any DLT & 0.847 (0.362) & 0.941 (0.237) & 11.10\% & 0.906 (0.294) & 6.90\% \\
Blood & 0.812 (0.393) & 0.824 (0.383) & 1.40\% & 0.706 (0.458) & -13.00\% \\
Constitutional & 0.953 (0.213) & 1.000 (0.000) & 4.90\% & 1.000 (0.000) & 4.90\% \\
Infection & 0.882 (0.324) & 0.894 (0.310) & 1.30\% & 0.800 (0.402) & -9.30\% \\
Gastrointestinal & 0.800 (0.402) & 1.000 (0.000) & 25.00\% & 1.000 (0.000) & 25.00\% \\ \midrule
Overall Survival & 10.855 (1.939) & 12.092 (1.470) & 11.40\% & 12.468 (1.430) & 14.90\% \\ \bottomrule
\end{tabular}}
    {We report the mean and standard deviation (SD) of constraint satisfaction (binary indicator) and overall survival (months) across the test set. The relative change is reported against the given treatment.\label{tab:gastric:gt}}
\end{table}

\paragraph{Benefit of multiple constraints.}
Table~\ref{tab:gastric:gt} also illustrates the value of enforcing constraints on each individual toxicity rather than as a single measure. When only constraining the aggregate toxicity measure (``DLT Only"), the resultant prescriptions actually have lower constraint satisfaction for blood toxicity and infection than the baseline given regimens. By constraining multiple measures, we are able to improve across all individual toxicities. The fully constrained model actually improves the overall DLT measure satisfaction, suggesting that the inclusion of these      ``sub-constraints" also makes the aggregate constraint more robust. This improvement does come at the expense of slightly lower survival between the ``All" and ``DLT Only" models (-0.38 months) but we note that incurring the individual toxicities that are violated in the ``DLT Only" model would likely make the treatment unviable. 

\section{Discussion}
Our experimental results illustrate the benefits of our constraint learning framework in data-driven decision making in two problem settings: food basket recommendations for the WFP and chemotherapy regimens for advanced gastric cancer. The quantitative results show an improvement in predictive performance when incorporating the trust region and learning from multiple candidate model classes. 
Our framework scales to large problem sizes, enabled by efficient formulations and tailored approaches to specific problem structures. Our approach for efficiently learning the trust region also has broad applicability in one-class constraint learning. 

The nominal problem formulation is strengthened by embedding multiple models for a single constraint rather than relying on a single learned function. This notion of robustness is particularly important in the context of learning constraints: whereas mis-specfications in learned objective functions can lead to suboptimal outcomes, a mis-specified constraint can lead to infeasible solutions. Finally, our software exposes the model ensemble construction and trust region enlargement options directly through user-specified parameters. This allows an end user to directly evaluate tradeoffs in objective value and constraint satisfaction, as the problem's real-world context often shapes the level of desired conservatism.


We recognize several opportunities to further extend this framework. Our work naturally relates to the causal inference literature and individual treatment effect estimation~\citep{Athey2016,shalit2017estimating}. These methods do not directly translate to our problem setting; existing work generally assumes highly structured treatment alternatives (\textit{e.g.}, binary treatment vs. control) or a single continuous treatment (\textit{e.g.}, dosing), whereas we allow more general decision structures. In future work, we are interested in incorporating ideas from causal inference to relax the assumption of unobserved confounders.

Additionally, our framework is dependent on the quality of the underlying predictive models. We constrain and optimize point predictions from our embedded models. This can be problematic in the case of model misspecification, a known shortcoming of ``predict-then-optimize" methods~\citep{Elmachtoub2021}. We mitigate this concern in two ways. First, our model selection procedure allows us to obtain higher quality predictive models by capturing several possible functional relationships. Second, our model wrapper approach for embedding a single constraint with an ensemble of models allows us to directly control our robustness to the predictions of individual learners. In future work, there is an opportunity to  incorporate ideas from robust optimization to directly account for prediction uncertainty in individual model classes. While this has been addressed in the linear case~\citep{goldfarb2003}, it remains an open area of research in more general ML methods.

In this work, we present a unified framework for optimization with learned constraints that leverages both ML and MIO for data-driven decision making. Our work flexibly learns problem constraints and objectives with supervised learning, and incorporates them into a larger optimization problem of interest. We also learn the trust region, providing more credible recommendations and improving predictive performance, and accomplish this efficiently using column generation and unsupervised learning. The generality of our method allows us to tackle quite complex decision settings, such as chemotherapy optimization, but also includes tailored approaches for more efficiently solving specific problem types.
Finally, we implement this as a Python software package (\texttt{OptiCL}) to enable practitioner use. We envision that  \texttt{OptiCL}'s methodology will be added to state-of-the-art optimization modeling software packages. 

\ACKNOWLEDGMENT{The authors thank the anonymous reviewers and editorial team for their valuable feedback on this work. This work was supported by the Dutch Scientific Council (NWO) grant OCENW.GROOT.2019.015, Optimization for and with Machine Learning (OPTIMAL). Additionally, Holly Wiberg was supported by the National Science Foundation Graduate Research Fellowship under Grant No. 174530. Any opinion, findings, and conclusions or recommendations expressed in this material are those of the authors(s) and do not necessarily reflect the views of the National Science Foundation.}

\bibliographystyle{informs2014} 
\bibliography{References.bib}

\newpage \clearpage
\renewcommand{\theHchapter}{A\arabic{chapter}}
\renewcommand{\thetable}{EC.\arabic{table}}
\renewcommand{\thefigure}{EC.\arabic{figure}}
\setcounter{table}{0}
\setcounter{figure}{0}
\newpage
\begin{APPENDICES}

\section{Machine learning model embedding}\label{appendix:embedding_ml}


\subsection{Linear models}

\paragraph{Linear Regression.}
Linear regression (LR) is a natural choice of predictive function given its inherent linearity and ease of embedding. A regression model can be trained to predict the outcome of interest, $y$, as a function of $\bm{x}$ and $\bm{w}$. The algorithm can optionally use regularization; the embedding only requires the final coefficient vectors $\bm{\beta_x} \in \mathbb{R}^{n}$ and $\bm{\beta_w} \in \mathbb{R}^{p}$ (and intercept term $\beta_0$) to describe the model. The model can then be embedded as 
$$y = \beta_0 + \bm{\beta_x}^\top\bm{x} + \bm{\beta_w}^\top\bm{w}.$$

\paragraph{Support Vector Machines.}
A support vector machine (SVM) uses  a hyper-plane split to generate predictions, both for classification \citep{cortes1995support} and regression \citep{Drucker1997}. We consider the case of \textit{linear} SVMs, since this allows us to obtain the prediction as a linear function of the decision variables $\bm{x}$. In linear support vector regression (SVR), which we use for function learning, we fit a linear function to the data. The setting is similar to  linear regression, but the loss function only penalizes residuals greater than an $\epsilon$ threshold~\citep{Drucker1997}. 
As with linear regression, the trained model returns a linear function with coefficients $\bm{\beta_x}$, $\bm{\beta_w}$, and $\beta_0$. The final prediction is
$$y = \beta_0 + \bm{\beta_x}^\top\bm{x} + \bm{\beta_w}^\top\bm{w}.$$

\noindent For the classification setting, linear support vector classification (SVC) identifies a hyper-plane that best separates positive and negative samples~\citep{cortes1995support}. A trained SVC model similarly returns coefficients $\bm{\beta_x}$, $\bm{\beta_w}$, and $\beta_0$, where a sample's prediction is given by
\begin{align*}
    y = \begin{dcases}
    1, ~ & {\text{if }\beta_0 + \bm{\beta_x}^\top\bm{x} + \bm{\beta_w}^\top\bm{w} \geq 0};\\
    0, ~ & {\text{otherwise}}.
\end{dcases}
\end{align*}
In SVC, the output variable $y$ is binary rather than a probability. In this case, the constraint can simply be embedded as $\beta_0 + \bm{\beta_x}^\top\bm{x} + \bm{\beta_w}^\top\bm{w} \geq 0$.
\\

\subsection{Decision trees}\label{appendix:dt}

Consider the leaves in Figure~\ref{fig:dt_example}. An observation will be assigned to the leftmost leaf (node 3) if $A_1^\top \bm{x} \leq b_1$ and $A_2^\top \bm{x} \leq b_2$. An observation would be assigned to node 4 if  $A_1^\top \bm{x} \leq b_1$ and $A_2^\top \bm{x} > b_2$, or equivalently, $-A_2^\top \bm{x} < -b_2$. Furthermore, we can remove the strict inequalities using a sufficiently small $\epsilon$ parameter, so that $-A_2^\top \bm{x} \leq -b_2 - \epsilon$. We can then encode the leaf assignment of observation $\bm{x}$ through the following constraints:
\begin{subequations}
\begin{align}
A_1^\top \bm{x} - M(1-l_3) & \leq b_1,   \label{eqn:dt:node1_leaf3} \\ 
A_2^\top \bm{x} - M(1-l_3) &  \leq b_2, \label{eqn:dt:node2_leaf3} \\
A_1^\top \bm{x} - M(1-l_4) & \leq b_1, \label{eqn:dt:node1_leaf4} \\
- A_2^\top \bm{x} - M(1-l_4) & \leq -b_2 - \epsilon, \label{eqn:dt:node2_leaf4} \\
- A_1^\top \bm{x} - M(1-l_6) & \leq -b_1 - \epsilon, \label{eqn:dt:node1_leaf6} \\
A_5^\top \bm{x} - M(1-l_6) & \leq b_5, \label{eqn:dt:node5_leaf6} \\
- A_1^\top \bm{x} - M(1-l_7) & \leq - b_1 - \epsilon, \label{eqn:dt:node5_leaf7} \\
- A_5^\top \bm{x}  - M(1-l_7) & \leq - b_5 - \epsilon, \label{eqn:dt:node1_leaf7} \\ 
l_3 + l_4 + l_6 + l_7 & = 1, \label{eqn:dt:assignment} \\ 
y - (p_3 l_3 + p_4  l_4 + p_6 l_6 + p_7 l_7) &= 0, \label{eqn:dt:pred}
\end{align}
\end{subequations}
where $l_3,l_4,l_6,l_7$ are binary variables associated with the corresponding leaves. For a given $\bm{x}$, if $A_1^\top \bm{x} \leq b_1$, Constraints (\ref{eqn:dt:node1_leaf6}) and (\ref{eqn:dt:node1_leaf7}) will force $l_6$ and $l_7$ to zero, respectively. If $A_2^\top \bm{x} \leq b_2$, constraint (\ref{eqn:dt:node2_leaf4}) will force $l_4$ to 0. The assignment constraint (\ref{eqn:dt:assignment}) will then force $l_3 = 1$, assigning the observation to leaf 3 as desired. Finally, constraint (\ref{eqn:dt:pred}) sets $y$ to the prediction of the assigned leaf ($p_3$). We can then constrain the value of $y$ using our desired upper bound of $\tau$ (or lower bound, without loss of generality).

More generally, consider a decision tree $\hat{h}(\bm{x},\bm{w})$ with a set of leaf nodes $\mathcal{L}$ each described by a binary variable  $l_i$ and a prediction score  $p_i$. Splits take the form $(A_x)^\top \bm{x} + (A_w)^\top \bm{w} \leq b$, where $A_x$ gives the coefficients for the optimization variables $\bm{x}$ and $A_w$ gives the coefficients for the non-optimization (fixed) variables $\bm{w}$. Let $\mathcal{S}^l$ be the set of nodes that define the splits that observations in leaf $i$ must obey. Without loss of generality, we can write these all as $(\bar{A}_x)_j^\top \bm{x} + (\bar{A}_w)_j^\top \bm{w} - M(1-l_i) \leq \bar{b}_j$, where $\bar{A}$ is $A$ if leaf $i$ follows the left split of $j$ and $-A$ otherwise. Similarly, $\bar{b}$ equals $b$ if the leaf falls to the left split, and $-b-\epsilon$ otherwise, as established above. This decision tree can then be embedded through the following constraints: 
\begin{subequations}
\begin{align}
(\bar{A}_x)_j^\top \bm{x} + (\bar{A}_w)_j^\top \bm{w} - M(1-l_i) & \leq \bar{b}_j, \ i \in \mathcal{L}, j \in \mathcal{S}^l, \label{eqn:dt:general_start}\\ 
\sum_{i \in \mathcal{L}} l_i & = 1, \\ 
y - \sum_{i \in \mathcal{L}} p_i l_i & = 0. \label{eqn:dt:general_end}
\end{align} 
\end{subequations}
Here, $M$ can be selected for each split by considering the maximum difference between $(\bar{A}_x)_j^\top \bm{x} + (\bar{A}_w)_j^\top\bm{w}$ and $b_j$. A prescription solution $\bm{x}$ for a  patient with features $\bm{w}$ must obey the constraints determined by its split path, \textit{i.e.} only the splits that lead to its assigned leaf $i$. If $l_i = 0$ for some leaf $i$, the corresponding split constraints need not be considered. If $l_i = 1$, constraint (\ref{eqn:dt:general_start}) will enforce that the solution obeys all split constraints leading to leaf $i$. If $l_i=0$, no constraints related to leaf $i$ should be applied. When $l_i = 0$, constraint  (\ref{eqn:dt:general_start}) will be nonbinding at node $j$ if $M \geq  (\bar{A}_x)_j^\top \bm{x} + (\bar{A}_w)_j^\top\bm{w} - \bar{b}_j$. Thus we can find the minimum necessary value of $M$ by maximizing these expressions over all possible values of $\bm{x}$ (for the patient's fixed $\bm{w}$). For a given patient with features $\bm{w}$ for whom we wish to optimize treatment, EM$(\bm{w})$ is the solution of

\begin{subequations}
\begin{align}
\max_{\bm{x}} &  (\bar{A}_x)_j^\top \bm{x} + (\bar{A}_w)_j^\top\bm{w} - \bar{b}_j ~ \\
\mbox{s.t.} \ &\bm{g}(\bm{x}, \bm{w}) \leq 0, \label{eqn:bigM_g} \\
& \bm{x} \in \mathcal{X}(\bm{w}). \label{eqn:bigM_tr}
\end{align}
\end{subequations}
Note that the non-learned constraints on $\bm{x}$, namely constraint (\ref{eqn:bigM_g}), and the trust region constraint (\ref{eqn:bigM_tr}) allow us to reduce the search space when determining $M$.

\paragraph{MIO vs. LO formulation for decision trees.}\label{appendix:mio_vs_lo}
In Section~\ref{sec:methodology}, we proposed two ways of embedding a decision tree as a constraint. The first uses an LO to represent each feasible leaf node in the tree, while the second directly uses the entire MIO representation of the tree as a constraint. To compare the performance of these two approaches, we learn the palatability constraint using decision trees (CART) grown to have various numbers of leaves, and solve the optimization model with both approaches.
\begin{figure}[ht]
    \FIGURE
    {\includegraphics[scale=0.4]{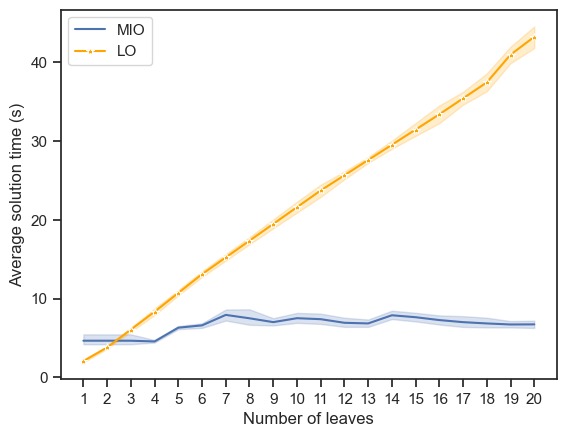}}
    {Comparison of MIO and multiple LO approach to tree representation, as a function of the number of leaves.\label{fig:MIO_VS_LP}}
    {}
\end{figure}

When comparing the solution times (averaged over 10 runs), Figure~\ref{fig:MIO_VS_LP} shows that the MIO approach is relatively consistent in terms of solution time regardless of the number of leaves. With the LO approach however, as the number of leaves grows, the number of LOs to be solved also grows. While the solution time of a single LO is very low, solving multiple LOs sequentially might be heavily time consuming. A way to speed up the process is to solve the LOs in parallel. When only one LO needs to be solved, it takes $1.8$ seconds in this problem setting. By parallelizing the solution of the LOs, the total solution time can be expected to take only as long as it takes for the slowest LO to be solved.

\subsection{Multi-layer perceptrons}\label{appendix:mlp}
MLPs consist of an input layer, $L-2$ hidden layer(s), and an output layer. In a given hidden layer $l$ of the network, with nodes $N^l$, the value of a node $i \in N^l$, denoted as $v_i^l$, is calculated using the weighted sum of the previous layer's node values, followed by the ReLU activation function, $\text{ReLU}(x) = \max\{0,x\}$. The value is given as
$$ v_i^l = \max \left\{0, \beta_{i0}^l + \sum_{j \in N^{l-1}}  \beta_{ij}^l v_j^{l-1} \right\},$$
where $\bm{\beta}_{i}^l$ is the coefficient vector for node $i$ in layer $l$. 

The ReLU operator can be encoded using linear constraints:
\begin{subequations}
\begin{align}
    v &\geq x, \label{eqn:relu_start} \\ 
    v &\leq x - M_L(1-z), \\
    v &\leq M_Uz, \\
    v &\geq 0, \\
    z &\in \left\{ 0, 1 \right\},\label{eqn:relu_end} 
\end{align}\label{eqn:relu}
\end{subequations}
where $M_L < 0$ is a lower bound on all possible values of $x$, and $M_U > 0$ is an upper bound. While this embedding relies on a big-$M$ formulation, it can be improved in multiple ways. The model can be tightened by careful selection of $M_L$ and $M_U$. Furthermore, \cite{Anderson2020} recently proposed an additional iterative cut generation procedure to improve the strength of the basic big-$M$ formulation. 

The constraints for an MLP network can be generated recursively starting from the input layer, with a set of ReLU constraints for each node in each internal layer, $l\in\left\{2,\ldots,L-1\right\}$. This allows us to embed a trained MLP with an arbitrary number of hidden layers and nodes into an MIO. 

\paragraph{Regression.}
In a regression setting, the output layer $L$ consists of a single node that is a linear combination of the node values in layer $L-1$, so it can be encoded directly as
$$y = v^L = \beta_{0}^L + \sum_{j \in N^{L-1}}  \beta_{j}^L v_j^{L-1}.$$

\paragraph{Binary Classification.}
In the binary classification setting, the output layer requires one neuron with a sigmoid activation function, $S(x) = \frac{1}{1+e^{-x}}$. The value is given as 
$$ v^L = \frac{1}{1+e^{-(\beta_0^L+\bm{\beta}^{L\top}\bm{v}^{L-1})}}$$
with $v^L \in (0,1)$. This function is nonlinear, and thus, cannot be directly embedded into our formulation. However, if $\tau$ is our desired probability lower bound, it will be satisfied when $\beta_0^L+\bm{\beta}^{L\top}\bm{v}^{L-1} \geq \ln\left(\frac{\tau}{1-\tau}\right)$. Therefore, the neural network's output, binarized with a threshold of $\tau$, is given by
\begin{align*}
    y = \begin{dcases}
    1, ~ & {\text{if }\beta_0^L+\bm{\beta}^{L\top}\bm{v}^{L-1} \geq \ln\left(\frac{\tau}{1-\tau}\right)};\\
    0, ~ & {\text{otherwise}}.
\end{dcases}
\end{align*}
For example, at a threshold of $\tau = 0.5$, the predicted value is 1 when $\beta_0^L+\bm{\beta}^{L\top}\bm{v}^{L-1} \geq 0$. Here, $\tau$ can be chosen according to the minimum necessary probability to predict 1. As for the SVC case, $y$ is binary and the constraint can be embedded as $y \geq 1$. We refer to Appendix \ref{appendix:mlp_multiclass} for the case of neural networks trained for multi-class classification.

\paragraph{Multi-class classification.}\label{appendix:mlp_multiclass}
In multi-class classification, the outputs are traditionally obtained by applying a \textit{softmax} activation function, $S(\bm{x})_i = e^{x_i}/\left(\sum_{k=1}^K{e^{x_k}}\right)$, to the final layer. This function ensures that the outputs sum to one and can thus be interpreted as probabilities. In particular, suppose we have a $K$-class classification problem. Each node in the final layer has an associated weight vector $\bm{\beta}_i$, which maps the nodes of layer $L-1$ to the output layer by $\bm{\beta}_i^\top \bm{v}^{L-1}$. The softmax function rescales these values, so that class $i$ will be assigned probability
$$v_i^L = \frac{e^{\bm{\beta_i}^\top \bm{v}^{L-1}}}{\sum_{k=1}^K{e^{\bm{\beta_k}^\top \bm{v}^{L-1}}}}.$$
We cannot apply the softmax function directly in an MIO framework with linear constraints. Instead, we use an \textit{argmax} function to directly return an indicator of the highest probability class, similar to the approach with SVC and binary classification MLP. In other words, the output $\bm{y}$ is the identity vector with $y_i = 1$ for the most likely class. Class $i$ has the highest probability if and only if
$$\beta_{i0}^L+\bm{\beta}_i^{L\top}\bm{v}^{L-1}  \geq \beta_{k0}^L+\bm{\beta}_k^{L\top}\bm{v}^{L-1}, \ \ \ k = 1, \ldots, K.$$
We can constrain this with a big-$M$ constraint as follows:
\begin{subequations}
\begin{align}
    \beta_{i0}^L+\bm{\beta}_i^{L\top}\bm{v}^{L-1}  \geq \beta_{k0}^L+\bm{\beta}_k^{L\top}\bm{v}^{L-1} - M(1-y_i), & \ \ \ k = 1, \ldots, K, \label{eqn_mc1}\\
    \sum_{k=1}^K y_k = 1.& \label{eqn_mc2}
\end{align}
\end{subequations}
Constraint~(\ref{eqn_mc1}) forces $y_i = 0$, if the constraint is not satisfied for some $k \in \{ 1, \ldots, K \}$. Constraint~(\ref{eqn_mc2}) ensures that $y_i = 1$ for the highest likelihood class. We can then constrain the prediction to fall in our desired class $i$ by enforcing $y_i = 1$.

\subsection{Model-wrapper approach}\label{app:model-wrapper}
As discussed in Section~\ref{subsect:robust_wrapper}, we can embed a set of models, rather than a single model, to improve the robustness of constraint satisfaction. This ensemble of $P$ estimators can be obtained through multiple approaches, such as through bootstrapped estimators within a single model class (e.g., $P$ linear models or $P$ decision trees) or by combining estimators across a range of model types (e.g., one linear model, one decision tree, and so on). Given the set of $P$ estimators, we then constrain that at most $\alpha$ proportion of the estimators violate the desired constraint. The $\alpha$ parameter then allows us to control the degree of conservativeness of our solution, with higher $\alpha$ values resulting in more permissive solutions and lower $\alpha$ resulting in more stringent constraint requirements. In order to constrain the violation proportion, we need indicator variables to indicate whether each of the $P$ estimators satisfies the constraint. We use a big-M formulation to obtain these indicators $z_i \ \forall i=1,\ldots,P$, as outlined in Section~\ref{subsect:robust_wrapper}. However, this does increase the complexity of the master problem through the introduction of additional binary variables. There are two special cases of the violation limit that circumvent the need for a big-M formulation:
\begin{itemize}
    \item \textbf{No allowable violation:} We can enforce a violation limit of $\alpha = 0\%$, effectively the most conservative ``worst case violation'' approach.
    \item \textbf{Average constraint:} Rather than constraining a certain proportion of estimators to obey the constraint, we can enforce that the \textit{average} prediction of all estimators obeys the constraint. This avoids the need for tracking individual constraint satisfaction for each estimator. 
\end{itemize}

In general, we note that the embedded models can be highly nonconvex on their own (e.g., if using an ensemble model as the base estimator, such as Random Forests). Thus, the additional constraints to identify and constrain violating models in this model wrapper approach are not the primary complexity drivers in the master problem, rather complexity is driven by the individual estimators. The experiments in Appendix~\ref{appendix:robustness} further investigate this latter issue: we explore runtime as the number of estimators ($P$) increases, the incremental benefit of increasing the number of estimators, and the impact of early stopping conditions.

\section{Trust region}\label{appendix:ch}
As we explain in Section~\ref{subsect:convex_hull_as_trust_region}, the trust region prevents the predictive models from extrapolating. It is defined as the convex hull of the set $\mathcal{Z} = \{ (\bm{\bar{x}}_i, \bm{\bar{w}}_i) \}_{i=1}^N$, with $\bm{\bar{x}}_i \in \mathbb{R}^n$ observed treatment decisions, and $\bm{\bar{w}}_i \in \mathbb{R}^p$ contextual information. In Section~\ref{appendix:def_convex_hull}, we explain the importance of using both $\bm{\bar{x}}$ and $\bm{\bar{w}}$ in the formulation of the convex hull. When the number of samples ($N$) is too large, the optimization model trust region constraints may become computationally expensive. In this case, we propose a column selection algorithm which is detailed in Section~\ref{appendix:columnselection}.
\subsection{Defining the convex hull}
\label{appendix:def_convex_hull}
We characterize the feasible decision space using the convex hull of our observed data. In general, we recommend defining the feasible region with respect to both $\bm{\bar{x}}$ and $\bm{\bar{w}}$. This ensures that our prescriptions are reasonable with respect to the contextual variables as well. Note that for different values of $\bm{w}$, the convex hull in the $\bm{x}$ space may be different. In Figure~\ref{fig:CHw12}, the shaded region represents the convex hull of $\mathcal{Z}$ formed by the dataset (blue dots), and the red line represents the set of trusted solutions when $\bm{w}$ is fixed to a certain value. In Figure~\ref{fig:CHw1}, we see that the set of trusted solutions (red line) lies within CH($\mathcal{Z}$) when we include $\bm{\bar{w}}$. If we leave out $\bm{\bar{w}}$ in the definition of the trust region, then we end up with the undesired situation shown in Figure~\ref{fig:CHw2}, where the solution may lie outside of CH($\mathcal{Z}$). We observe that in some cases we must define the convex hull with a subset of variables. This is true in cases where the convex hull constraint leads to excessive data thinning, in which case it may be necessary to define the convex hull on treatment variables only. 

\begin{figure}
\centering
\caption{Effect of $\bm{\bar{w}}$ on the trust region.\label{fig:CHw12}}
\begin{subfigure}[t]{0.4\textwidth}
        \includegraphics{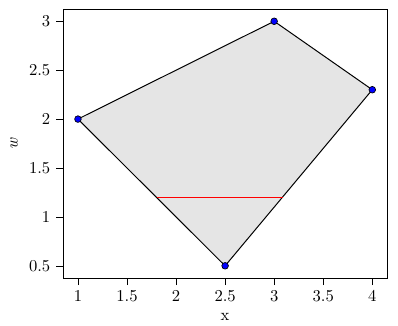}
        \caption{Solutions lie within trust region.}
        \label{fig:CHw1}
    \end{subfigure}
    \begin{subfigure}[t]{0.4\textwidth}
        \includegraphics{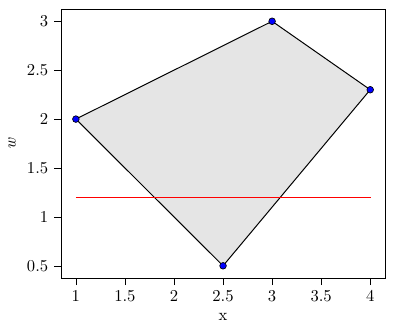}
        \caption{Solutions may lie outside the trust region.}
        \label{fig:CHw2}
    \end{subfigure}
{}
\end{figure} 

\subsection{Column selection}
\label{appendix:columnselection}
In this section, we propose a column selection method to deal with a huge set of data points. When objectives and constraints are linear, our method reduces to the Dantzig-Wolfe decomposition method \citep{Dantzig1960}. However, in our case, we do not have to solve the dual problem, since we can just enumerate all the data points. In case the functions $f$ and $g$ in formulation (\ref{eqn:conceptualmodel}) are nonlinear and convex the Dantzig-Wolfe method cannot be used. The key point in our approach is the choice of the dual problem. Although the use of Fenchel duality seems a logical way to deal with our problem, it appears that Wolfe duality, which in general leads to nonconvex formulations, is exactly what we need. In  \citep{Stoer2005} and \citep{Stoer2007} another method is described to optimize over the convex hull of a huge set of points. However, the method proposed in these papers is only suitable for problems that have only the convex hull constraint and no additional constraints.

Let P$_\mathcal{I}$ be a convex and continuously differentiable model consisting of an objective function and constraints that may be known a priori as well as learned from data. Like in Section \ref{subsect:convex_hull_as_trust_region}, we denote the index set of samples by $\mathcal{I}$. As part of the constraints, the trust region is defined on the entire set $\mathcal{Z}$. We start with the matrix $\bm{Z} \in \mathbb{R}^{N\times(n+p)}$, where each row corresponds to a given data point in  $\mathcal{Z}$. Then, model P$_\mathcal{I}$ is given as
\begin{subequations}
\begin{align}
\min_{\bm{\lambda}} \ & f(\bm{Z}^\top\bm{\lambda}) \label{eqn:general_cs1}\\
\mbox{s.t.} \ & g_j(\bm{Z}^\top\bm{\lambda}) \leq 0, \ \ \  j=1,\dots,m, & \perp \bm{\mu}, \label{eqn:general_cs2}\\
& \sum_{i \in \mathcal{I}}\lambda_i = 1, & \perp \rho, \label{eqn:general_cs3}\\
& \lambda_i \geq 0, \ \ \  i \in \mathcal{I}, & \perp \bm{\upsilon}, \label{eqn:general_cs4}
\end{align}
\end{subequations}
where the decision variable $\bm{x}$ is replaced by $\bm{Z}^\top\bm{\lambda}$. Constraints (\ref{eqn:general_cs2}) include both \textit{known} and \textit{learned} constraints, while constraints (\ref{eqn:general_cs3}) and (\ref{eqn:general_cs4}) are used for the trust region. The dual variables associated with with constraints (\ref{eqn:general_cs2}), (\ref{eqn:general_cs3}), and (\ref{eqn:general_cs4}) are $\bm{\mu} \in \mathbb{R}^m, \rho \in \mathbb{R}$, and $\bm{\upsilon} \in \mathbb{R}^N$, respectively. Note that for readability, we omit the contextual variables ($\bm{w}$) without loss of generality.

When we deal with huge datasets, solving P$_\mathcal{I}$ may be computationally expensive. Therefore, we propose an iterative column selection algorithm (Algorithm~\ref{alg:col_selection}) that can be used to speed up the optimization while still obtaining a global optima.
\begin{algorithm}
\caption {Column Selection}\label{alg:col_selection}
\begin{algorithmic}[1]
\Require{$\mathcal{I}$} \Comment{Index set of columns of $\bm{Z}^\top$}
\Ensure{$\bm{\lambda}^*$} \Comment{Optimal solution}
\State $\mathcal{I}^\prime \gets \mathcal{I}^0$ \Comment{Initial column pool} 
\While {TRUE}
    \State $\bm{\lambda}^*$, ($\bm{\mu}^*, \rho^*, \bm{\upsilon}^*$) $\gets$ P$_{\mathcal{I}^\prime}$
    \State $\bar{\mathcal{I}}$ $\gets$ \Call{WolfeDual}{$\bm{\lambda}^*$, ($\bm{\mu}^*, \rho^*, \bm{\upsilon}^*$), $\mathcal{I}^\prime, \mathcal{I}$} \Comment{Column(s) selection}
    \If{$\Bar{\mathcal{I}} \neq \emptyset$}
        \State $\mathcal{I}^\prime \gets \mathcal{I}^\prime \cup$ $\Bar{\mathcal{I}}$
    \Else
        \State Break
    \EndIf
\EndWhile
\end{algorithmic}
\end{algorithm}

The algorithm starts by initializing $\mathcal{I}^\prime \subseteq \mathcal{I}$ with an arbitrarily small subset of samples $\mathcal{I}^0$ and iteratively solves the restricted master problem  P$_\mathcal{I^\prime}$ and the \Call{WolfeDual}{} function. By solving P$_\mathcal{I^\prime}$, we get the primal and dual optimal solutions $\bm{\lambda}^*$ and ($\bm{\mu}^*, \rho^*, \bm{\upsilon}^*$), respectively. The primal and dual optimal solutions, together with $\mathcal{I}$ and $\mathcal{I}^\prime$, are given as input to \Call{WolfeDual}{} which returns a set of samples $\bar{\mathcal{I}} \subseteq \mathcal{I} \setminus \mathcal{I}^\prime$ with negative reduced cost.
If $\bar{\mathcal{I}}$ is not empty it is added to $\mathcal{I}^\prime$ and a new iteration starts, otherwise the algorithm stops, and $\bm{\lambda}^*$ (with the corresponding $\bm{x}^*$) is returned as the global optima of P$_\mathcal{I}$. A visual interpretation of Algorithm~\ref{alg:col_selection} is shown in Figure~\ref{fig:column_selection}. 

In function \Call{WolfeDual}{}, samples $\bar{\mathcal{I}}$ are selected using the  Karush–Kuhn–Tucker (KKT) stationary condition which corresponds to the equality constraint in the Wolfe dual formulation of P$_\mathcal{I}$~\citep{Wolfe_1961}. The KKT stationary condition of P$_\mathcal{I^\prime}$ is
\begin{align}
    \nabla_{\bm{\lambda}}f(\tilde{\bm{Z}}^\top\bm{\lambda}^*) + \sum_{i=1}^m \mu_i^*\nabla_{\bm{\lambda}}g_i(\tilde{\bm{Z}}^\top\bm{\lambda^*}) - \bm{e}\rho^* - \bm{\upsilon}^* = \bm{0}, \label{eqn:wolfeConstraint1}
\end{align}
where $\tilde{\bm{Z}}$ is the matrix constructed with samples in $\mathcal{I}^\prime$, and $\bm{e}$ is an $N^\prime$-dimensional vector of ones with $N^\prime = |\mathcal{I}^\prime|$.
Equation (\ref{eqn:wolfeConstraint1}) can be rewritten as
\begin{align}
    \tilde{\bm{Z}}\nabla_{\bm{x}}f(\tilde{\bm{Z}}^\top\bm{\lambda}^*) + \sum_{i=1}^m \mu_i^*\tilde{\bm{Z}}\nabla_{\bm{x}} g_i(\tilde{\bm{Z}}^\top\bm{\lambda}^*) - \bm{e}\rho^* - \bm{\upsilon}^* = \bm{0}. \label{eqn:wolfeConstraint2}
\end{align}
Equation (\ref{eqn:wolfeConstraint2}) is used to evaluate the reduced cost related to each sample $\bm{\bar{z}} \in \mathcal{Z}$ which is not in matrix $\tilde{\bm{Z}}$. Consider a new sample $\bm{\bar{z}}$ in (\ref{eqn:wolfeConstraint2}), with its associated $\lambda_{N^\prime+1}$ set equal to zero. ($\lambda_1^*,\dots,\lambda_{N^\prime}^*,\lambda_{N^\prime+1}$) is still a feasible solution of the restricted master problem P$_\mathcal{I^\prime}$, since it does not affect the value of $\bm{x}$. As a consequence, $\bm{\mu}$ and $\rho$ will not change their value, nor will $f$ and $\bm{g}$. The only unknown variable is $\upsilon_{N^\prime+1}$, namely the reduced cost of $\bm{\bar{z}}$. However, we can write it as
\begin{align}
     \begin{pmatrix}
           \bm{\upsilon}^*\\
           \upsilon_{N^\prime+1}
      \end{pmatrix} =             
      \begin{pmatrix}
           \tilde{\bm{Z}} \\
           \bm{\bar{z}}^\top
      \end{pmatrix}
      \nabla_{\bm{x}}f(\tilde{\bm{Z}}^\top\bm{\lambda^*}) + \sum_{i=1}^t \mu_i^*
      \begin{pmatrix}
           \tilde{\bm{Z}} \\
           \bm{\bar{z}}^\top
      \end{pmatrix}\nabla_{\bm{x}} g_i(\tilde{\bm{Z}}^\top\bm{\lambda^*}) - \bm{e}\rho^*. \label{eqn:wolfeConstraint3}
\end{align}
If $\upsilon_{N^\prime+1}$ is negative it means that we may improve the incumbent solution of P$_\mathcal{I^\prime}$ by including the sample $\bm{\bar{z}}$ in $\tilde{\bm{Z}}$. 
\begin{lemma}
After solving the convex and continuously differentiable problem P$_{\mathcal{I}^\prime}$, the sample in $\mathcal{I}\setminus\mathcal{I}^\prime$ with the most negative reduced cost is a vertex of the convex hull CH($\mathcal{Z}$).
\end{lemma}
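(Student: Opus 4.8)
The plan is to recognize that the reduced cost of a candidate sample is an \emph{affine} function of that sample, so that selecting the most negative reduced cost amounts to minimizing a fixed linear functional over the point set $\mathcal{Z}$; the conclusion then follows from the elementary fact that a linear functional attains its minimum over a polytope at a vertex. Concretely, from~(\ref{eqn:wolfeConstraint3}) I would define the fixed vector
\begin{align*}
\bm{c} \;=\; \nabla_{\bm{x}}f(\tilde{\bm{Z}}^\top\bm{\lambda}^*) \;+\; \sum_{i=1}^m \mu_i^*\,\nabla_{\bm{x}} g_i(\tilde{\bm{Z}}^\top\bm{\lambda}^*),
\end{align*}
and read off that the reduced cost of any sample $\bm{\bar{z}}\in\mathcal{Z}$ is $\upsilon(\bm{\bar{z}}) = \bm{c}^\top\bm{\bar{z}}-\rho^*$. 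The essential observation, already implicit in the derivation of~(\ref{eqn:wolfeConstraint3}), is that $\bm{c}$ and $\rho^*$ do \emph{not} depend on which candidate $\bm{\bar{z}}$ we probe: appending a column with weight $\lambda_{N^\prime+1}=0$ leaves $\bm{x}=\tilde{\bm{Z}}^\top\bm{\lambda}^*$, the multipliers $\bm{\mu}^*,\rho^*$, and hence the gradients of $f$ and $\bm{g}$ unchanged. Thus $\bm{\bar{z}}\mapsto\upsilon(\bm{\bar{z}})$ is affine with a common slope $\bm{c}$.

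Next I would reduce the minimization over $\mathcal{I}\setminus\mathcal{I}^\prime$ to a minimization over all of $\mathcal{Z}$. By the complementarity associated with the dual variable $\bm{\upsilon}$ in~(\ref{eqn:general_cs4}), every sample already in $\mathcal{I}^\prime$ satisfies $\upsilon_i^*\ge 0$, i.e.\ $\bm{c}^\top\bm{\bar{z}}_i\ge\rho^*$. Provided the selected sample has \emph{strictly} negative reduced cost (otherwise the algorithm terminates and there is nothing to prove), it satisfies $\bm{c}^\top\bm{\bar{z}}<\rho^*\le\bm{c}^\top\bm{\bar{z}}_i$ for every $i\in\mathcal{I}^\prime$. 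Hence the sample with the most negative reduced cost in $\mathcal{I}\setminus\mathcal{I}^\prime$ is in fact a global minimizer of the linear functional $\bm{z}\mapsto\bm{c}^\top\bm{z}$ over the entire set $\mathcal{Z}$.

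Finally I would invoke the extreme-point characterization. Since $\mathcal{Z}$ is finite and its convex hull is the polytope $\mathrm{CH}(\mathcal{Z})$ whose extreme points all lie in $\mathcal{Z}$, we have $\min_{\bm{z}\in\mathcal{Z}}\bm{c}^\top\bm{z}=\min_{\bm{z}\in\mathrm{CH}(\mathcal{Z})}\bm{c}^\top\bm{z}$, and a linear functional attains its minimum over a polytope on a face, in particular at a vertex. To pin down that the \emph{selected} sample $\bm{z}^*$ is itself a vertex, suppose not: then $\bm{z}^*$ lies in the convex hull of the remaining points of $\mathcal{Z}$, say $\bm{z}^*=\sum_k\alpha_k\bm{z}_k$ with $\alpha_k\ge 0$, $\sum_k\alpha_k=1$ and each $\bm{z}_k\neq\bm{z}^*$; linearity then gives $\bm{c}^\top\bm{z}^*=\sum_k\alpha_k\,\bm{c}^\top\bm{z}_k\ge\bm{c}^\top\bm{z}^*$, with equality forcing every $\bm{z}_k$ with $\alpha_k>0$ to be a minimizer as well, which contradicts the uniqueness of the most-negative-reduced-cost sample. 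Therefore $\bm{z}^*$ is a vertex of $\mathrm{CH}(\mathcal{Z})$.

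I expect the main obstacle to be the bookkeeping around the ``fixed slope'' claim and the tie-breaking step rather than any deep argument: one must justify carefully that $\bm{c}$ and $\rho^*$ are genuinely independent of the probed sample (so that the reduced cost is truly affine and not merely evaluated at a frozen point), and one must handle degenerate ties, where several samples attain the minimum, by combining the ``most negative'' selection with a consistent tie-break so that a vertex is always returned. Both are routine once the affine reformulation is in place, which is the conceptual crux of the proof.
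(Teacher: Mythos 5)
Your proof is correct and follows essentially the same route as the paper's: both observe that the reduced cost is an affine function of the candidate sample whose slope $\nabla_{\bm{x}}f(\tilde{\bm{Z}}^\top\bm{\lambda}^*) + \sum_{i=1}^m \mu_i^*\nabla_{\bm{x}} g_i(\tilde{\bm{Z}}^\top\bm{\lambda}^*)$ and offset $\rho^*$ are fixed by the incumbent solution, and then conclude that minimizing this linear functional over the polytope CH($\mathcal{Z}$) yields a vertex. You are in fact slightly more careful than the paper on two points it glosses over, namely using dual feasibility ($\bm{\upsilon}^* \geq \bm{0}$) to justify that the global minimizer lies in $\mathcal{I}\setminus\mathcal{I}^\prime$, and flagging that degenerate ties require a consistent tie-break for the selected sample to genuinely be a vertex.
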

\begin{proof}{Proof}
From equation (\ref{eqn:wolfeConstraint3}) we have
\begin{align}
    \upsilon_{N^\prime+1} = \bm{\bar{z}}^\top\nabla_{\bm{x}}f(\tilde{\bm{Z}}^\top\bm{\lambda^*}) + \bm{\bar{z}}^\top\nabla_{\bm{x}} \bm{g}(\tilde{\bm{Z}}^\top\bm{\lambda^*})\bm{\mu}^* - \rho^*. \label{eqn:proof1}
\end{align}
The problem of finding $\bm{\bar{z}}$, such that its reduced cost is the most negative one, can be written as a linear program where equation (\ref{eqn:proof1}) is being minimized, and a solution must lie within  CH($\mathcal{Z}$). That is,
\begin{equation}
\begin{aligned}
    \min_{\bm{z}, \bm{\lambda}} \ & \bm{z}^\top\nabla_{\bm{x}}f(\tilde{\bm{Z}}^\top\bm{\lambda^*}) + \bm{z}^\top\nabla_{\bm{x}} \bm{g}(\tilde{\bm{Z}})\bm{\mu}^* - \rho^* &\\
    \mbox{s.t.} \ & \bm{Z}^\intercal\bm{\lambda} = \bm{z},  &\\
    & \sum_{j\in \mathcal{I}}\lambda_j = 1, &\\
    & \lambda_j \geq 0, \ \ \ j \in \mathcal{I},
\label{eqn:proof2}\end{aligned}
\end{equation}
where $\bm{z}$ and $\bm{\lambda}$ are the decision variables, and $\bm{\mu}^*$, $\bm{\lambda}^*$, $\rho^*$ are fixed parameters. Since the objective function is linear with respect to  $\bm{z}$, the optimal solution of~(\ref{eqn:proof2}) will necessarily be a vertex of  CH($\mathcal{Z}$). \Halmos \end{proof}

To illustrate the benefits of column selection, consider the following convex optimization problem that we shall refer to as $P_{exp}$:
\begin{subequations}
\begin{align}
\min_{\bm{x}} \ & \bm{c}^\top \bm{x}\\
\mbox{s.t.} \ & log(\sum_{i=1}^{n}e^{x_i}) \leq t, \label{eqn:LSE_constr}\\
& \bm{A}\bm{x} \leq b, \label{eqn:leaned_constr} & \\
& \sum_{i=1}^{N}\lambda_i \bm{\bar{z}}_i = \bm{x}, \label{eqn:cs_tr1} & \\
& \sum_{j=1}^{N}\lambda_j = 1, & \\
& \lambda_j \geq 0, \ \ \ j = 1\dots N. \label{eqn:cs_tr3}
\end{align}
\end{subequations}
Without a loss of generality, we assume that the constraint (\ref{eqn:LSE_constr}) is known a priori, and constraints (\ref{eqn:leaned_constr}) are the linear embeddings of learned constraints with $\bm{A} \in \mathbb{R}^{k\times n}$ and $\bm{b}\in\mathbb{R}^k$. Constraints (\ref{eqn:cs_tr1}-\ref{eqn:cs_tr3}) define the trust region based on $N$ datapoints. Figure~\ref{fig:columns_selection_performance} shows the computation time required to solve $P_{exp}$ with different values of $n$, $k$, and $N$. The “No Column Selection” approach consists of solving $P_{exp}$ using the entire dataset. The “Column Selection” approach makes use of Algorithm~\ref{alg:col_selection} to solve the problem, starting with $|\mathcal{I}^0|=100$, and selecting only one sample at each iteration, \textit{i.e.}, the one with the most negative reduced cost. It can be seen that in all cases, the use of column selection results in significantly improved computation times. This allows us to more quickly define the trust region for problems with large amounts of data.

\begin{figure}

    \FIGURE{\includegraphics[scale=0.5]{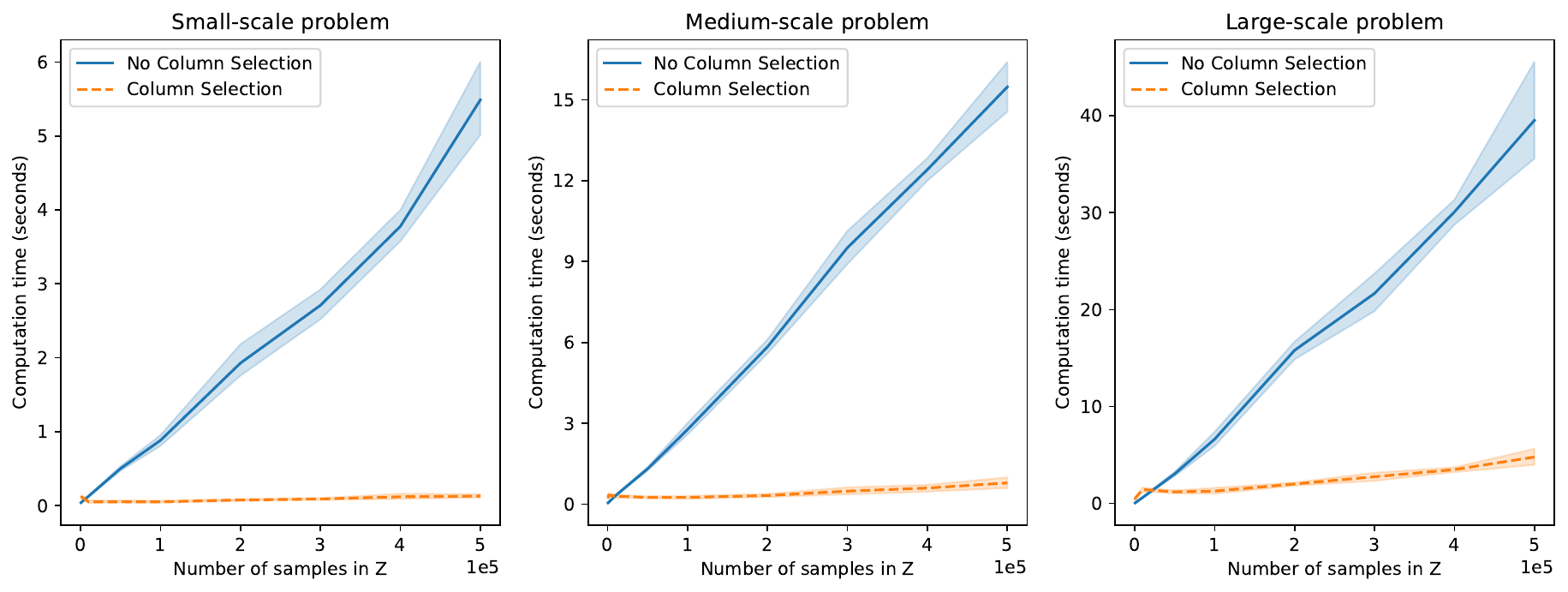}}
    {Effect of column selection on computation time. Solution times are reported for three different sizes of problem $P_{exp}$. Small-scale: $n = 5, \ k = 10$. Medium-scale: $n = 10, \ k = 50$. Large-scale: $n = 20, \ k = 100$. The number of samples goes from 500 to $5\times10^5$. In each iteration, the sample with most negative reduced cost is selected. The same problem is solved using \cite{mosek} with conic reformulation for 10 different instances where $c$, $A$, and $b$ are randomly generated. \label{fig:columns_selection_performance}\label{fig:CS_times}}
    {}
\end{figure}

\section{WFP case study}\label{appendix:wfp}
Table~\ref{tab:nutrient_values} and Table~\ref{tab:nutrient_requirements} show the nutritional value of each food and our assumed nutrient requirements, respectively. The values adopted are based on the World Health Organization (WHO) guidelines~\citep{WHOreport}.

\begin{table}[H]
\TABLE{Nutritional contents per gram for different foods.\label{tab:nutrient_values}
}{
\resizebox{\textwidth}{!}{%
\begin{tabular}{lrrrrrrrrrrrr}
\hline
\textbf{Food} &
  \textbf{Eng(kcal)} &
  \textbf{Prot(g)} &
  \textbf{Fat(g)} &
  \textbf{Cal(mg)} &
  \textbf{Iron(mg)} &
  \textbf{VitA(ug)} &
  \textbf{ThB1(mg)} &
  \textbf{RibB2(mg)} &
  \textbf{NicB3(mg)} &
  \textbf{Fol(ug)} &
  \textbf{VitC(mg)} &
  \textbf{Iod(ug)} \\ \hline
\multicolumn{1}{l|}{\textbf{Beans}}                            & 335 & 20   & 1.2 & 143  & 8.2  & 0     & 0.5  & 0.22 & 2.1 & 180 & 0  & 0       \\
\multicolumn{1}{l|}{\textbf{Bulgur}}                           & 350 & 11   & 1.5 & 23   & 7.8  & 0     & 0.3  & 0.1  & 5.5 & 38  & 0  & 0       \\
\multicolumn{1}{l|}{\textbf{Cheese}}                           & 355 & 22.5 & 28  & 630  & 0.2  & 120   & 0.03 & 0.45 & 0.2 & 0   & 0  & 0       \\
\multicolumn{1}{l|}{\textbf{Fish}}                             & 305 & 22   & 24  & 330  & 2.7  & 0     & 0.4  & 0.3  & 6.5 & 16  & 0  & 0       \\
\multicolumn{1}{l|}{\textbf{Meat}}                             & 220 & 21   & 15  & 14   & 4.1  & 0     & 0.2  & 0.23 & 3.2 & 2   & 0  & 0       \\
\multicolumn{1}{l|}{\textbf{Corn-soya blend}}            & 380 & 18   & 6   & 513  & 18.5 & 500   & 0.65 & 0.5  & 6.8 & 0   & 40 & 0       \\
\multicolumn{1}{l|}{\textbf{Dates}}                            & 245 & 2    & 0.5 & 32   & 1.2  & 0     & 0.09 & 0.1  & 2.2 & 13  & 0  & 0       \\
\multicolumn{1}{l|}{\textbf{Dried skim milk}} & 360 & 36   & 1   & 1257 & 1    & 1,500 & 0.42 & 1.55 & 1   & 50  & 0  & 0       \\
\multicolumn{1}{l|}{\textbf{Milk}}                             & 360 & 36   & 1   & 912  & 0.5  & 280   & 0.28 & 1.21 & 0.6 & 37  & 0  & 0       \\
\multicolumn{1}{l|}{\textbf{Salt}}                             & 0   & 0    & 0   & 0    & 0    & 0     & 0    & 0    & 0   & 0   & 0  & 1000000 \\
\multicolumn{1}{l|}{\textbf{Lentils}}                          & 340 & 20   & 0.6 & 51   & 9    & 0     & 0.5  & 0.25 & 2.6 & 0   & 0  & 0       \\
\multicolumn{1}{l|}{\textbf{Maize}}                            & 350 & 10   & 4   & 13   & 4.9  & 0     & 0.32 & 0.12 & 1.7 & 0   & 0  & 0       \\
\multicolumn{1}{l|}{\textbf{Maize meal}}                       & 360 & 9    & 3.5 & 10   & 2.5  & 0     & 0.3  & 0.1  & 1.8 & 0   & 0  & 0       \\
\multicolumn{1}{l|}{\textbf{Chickpeas}}                        & 335 & 22   & 1.4 & 130  & 5.2  & 0     & 0.6  & 0.19 & 3   & 100 & 0  & 0       \\
\multicolumn{1}{l|}{\textbf{Rice}}                             & 360 & 7    & 0.5 & 7    & 1.2  & 0     & 0.2  & 0.08 & 2.6 & 11  & 0  & 0       \\
\multicolumn{1}{l|}{\textbf{Sorghum/millet}}                   & 335 & 11   & 3   & 26   & 4.5  & 0     & 0.34 & 0.15 & 3.3 & 0   & 0  & 0       \\
\multicolumn{1}{l|}{\textbf{Soya-fortified bulgur wheat}}      & 350 & 17   & 1.5 & 54   & 4.7  & 0     & 0.25 & 0.13 & 4.2 & 74  & 0  & 0       \\
\multicolumn{1}{l|}{\textbf{Soya-fortified maize meal}}        & 390 & 13   & 1.5 & 178  & 4.8  & 228   & 0.7  & 0.3  & 3.1 & 0   & 0  & 0       \\
\multicolumn{1}{l|}{\textbf{Soya-fortified sorghum grits}}     & 360 & 360  & 1   & 40   & 2    & 0     & 0.2  & 0.1  & 1.7 & 50  & 0  & 0       \\
\multicolumn{1}{l|}{\textbf{Soya-fortified wheat flour}}       & 360 & 16   & 1.3 & 211  & 4.8  & 265   & 0.66 & 0.36 & 4.6 & 0   & 0  & 0       \\
\multicolumn{1}{l|}{\textbf{Sugar}}                            & 400 & 0    & 0   & 0    & 0    & 0     & 0    & 0    & 0   & 0   & 0  & 0       \\
\multicolumn{1}{l|}{\textbf{Oil}}                              & 885 & 0    & 100 & 0    & 0    & 0     & 0    & 0    & 0   & 0   & 0  & 0       \\
\multicolumn{1}{l|}{\textbf{Wheat}}                            & 330 & 12.3 & 1.5 & 36   & 4    & 0     & 0.3  & 0.07 & 5   & 51  & 0  & 0       \\
\multicolumn{1}{l|}{\textbf{Wheat flour}}                      & 350 & 11.5 & 1.5 & 29   & 3.7  & 0     & 0.28 & 0.14 & 4.5 & 0   & 0  & 0       \\
\multicolumn{1}{l|}{\textbf{Wheat-soya blend}}           & 370 & 20   & 6   & 750  & 20.8 & 498   & 1.5  & 0.6  & 9.1 & 0   & 40 & 0       \\ \hline
\end{tabular}}%
}{Eng = Energy, Prot = Protein, Cal = Calcium, VitA = Vitamin A, ThB1 = ThiamineB1, RibB2 = RiboflavinB2, NicB3 = NicacinB3, Fol = Folate, VitC = Vitamin C, Iod = Iodine}
\end{table}%

\begin{table}[H]
\TABLE{Nutrient requirements used in optimization model.\label{tab:nutrient_requirements}
}{
\resizebox{\textwidth}{!}{%
\begin{tabular}{lrrrrrrrrrrrr}
\hline
\textbf{Type} &
  \textbf{Eng(kcal)} &
  \textbf{Prot(g)} &
  \textbf{Fat(g)} &
  \textbf{Cal(mg)} &
  \textbf{Iron(mg)} &
  \textbf{VitA(ug)} &
  \textbf{ThB1(mg)} &
  \textbf{RibB2(mg)} &
  \textbf{NicB3(mg)} &
  \textbf{Fol(ug)} &
  \textbf{VitC(mg)} &
  \textbf{Iod(ug)} \\
\textbf{Avg person day} &
  2100 &
  52.5 &
  89.25 &
  1100 &
  22 &
  500 &
  0.9 &
  1.4 &
  12 &
  160 &
  0 &
  150 \\ \hline
\end{tabular}}%
}{Eng = Energy, Prot = Protein, Cal = Calcium, VitA = Vitamin A, ThB1 = ThiamineB1, RibB2 = RiboflavinB2, NicB3 = NicacinB3, Fol = Folate, VitC = Vitamin C, Iod = Iodine}
\end{table}


\subsection{Food baskets generation and palatability function}\label{app:palatability}
Referring to \cite{peters2021nutritious}, a food basket $x_k (\forall k \in \mathcal{K})$ is defined as a collection of $\mathcal{K}$ commodities, such as beans, meat, and oil, along with their respective quantities measured in grams. These commodities are classified into five macro-categories: cereals and grains, pulses and vegetables, oils and fats, mixed and blended foods, and meat and fish, as well as dairy. Each macro-category $g \in \mathcal{G}$ is associated with an upper bound ($max_g$) and a lower bound ($min_g$), as shown in Table \ref{tab:macrocat}. We use the notation $\mathcal{K}_g$ to indicate the set of commodities belonging to category $g$. In contrast to the approach used in \cite{peters2021nutritious}, where bound constraints were utilized to ensure the palatability of the food basket, we expand the notion of palatability by incorporating a palatability score that ranges from non-negative values and tends towards zero for more enjoyable diets. The score is calculated as:

\begin{align}
Palatability\ Score = \sqrt{\sum_{g\in \mathcal{G}}(\gamma_{g}(\widehat{x}_{g}-Opt_{g}))^{2}},\label{eqn:palatability_score}
\end{align}
where
$$ \widehat{x}_{g} = \sum_{k \in \mathcal{K}_g} x_{k} ~~ \text{with} \ g \in \mathcal{G} \text{ and} $$
$$ Opt_{g} = \frac{max_{g} + min_{g}}{2} ~~ \text{with} \ g \in \mathcal{G}.
$$
To account for the different range sizes ($max_g - min_g$) across the macro-categories, we introduce a scaling parameter $\gamma_{g}$ that determines their influence on the score, as presented in Table \ref{tab:macrocat}. The resulting score is normalized on a scale of 0 to 1, where a score of 1 represents a perfectly appetizing food basket, while a score of 0 indicates an inedible basket.
\begin{table}[h]
\TABLE{Macro-categories bounds and scaling factor.\label{tab:macrocat}
}{
\centering
\begin{tabular}{lccr}
\toprule
\textbf{macro-category}         & \multicolumn{1}{c}{$min$} & \multicolumn{1}{c}{\textbf{$max$}} &
\multicolumn{1}{c}{$\gamma$}\\ \midrule
Cereals \& Grains      & 200                              & 600   & 1                          \\
Pulses \& Vegetables   & 30                               & 100     &5.7                         \\
Oils \& Fats           & 15                               & 40     &16                           \\
Mixed \& Blended Foods & 0                                & 90     &4.4                           \\  
Meat \& Fish \& Dairy           & 0                               & 60     &6.6                           \\ \bottomrule
\end{tabular}}{}
\end{table}

\noindent The generation of diverse food baskets is done by solving several diet problems whose cost function changes at each run and enforcing constraints on the nutrient requirements as well as on the maximum number of foods belonging to the same category.
\subsection{Predictive models}\label{appendix:predictive_modelsWFP}
Table~\ref{tab:table_model_prameters} shows the structure of the predictive models used in the WFP experiments. For each model, the choice of parameters is based on a cross-validation procedure.

\begin{table}[htbp]
\TABLE{Definition of the predictive model parameters used in the WFP case study\label{tab:table_model_prameters}
}{
\centering
\begin{tabular}{ll}
\toprule
\textbf{Model} & \multicolumn{1}{c}{\textbf{Parameters}} \\ \midrule
Linear & ElasticNet parameters: 0.1 (alpha), 0.1 ($\ell_1$-ratio) \\
SVM & regularization parameter: 100 \\
CART & max depth: 10, max features: 1.0, min samples leaf: 0.02 \\
RF & max depth : 4, max features: auto, number of estimators: 25 \\
GBM & learning rate: 0.2, max depth: 5, number of estimators: 20 \\
MLP & hidden layers: 1, size hidden layers: (100,) activation: relu \\ \bottomrule
\end{tabular}}{}
\end{table}

\subsection{Effect of robustness parameters}\label{appendix:robustness}

\paragraph{Robustness impact by algorithm.}
Table~\ref{tab:robustness_compare} reports the change in objective value (cost) and constrained outcome (palatbility) between the nominal and bootstrapped solution with 10 estimators and a violation limit of 25\%. The goal of the WFP case study is to minimize cost such that palatability is at least 0.5; thus, a smaller cost and larger palatability are better. As expected, the robust solution increases both the cost and palatability of the prescribed diets. We see that the relative increase in cost is consistently lower than the relative increase in real palatability across all methods, indicating that the improvement in palatability exceeds the incremental cost addition. While the acceptable trade off between cost and palatability could differ by use case, this could be further explored with alternative violation limits. Additionally, we compare the single algorithm constraints against an ensemble of all six methods, also with a violation limit of $\alpha = 0.25$. The ensemble with multiple algorithms yields an objective value of 1313 and real palatability of 0.57. This represents a -1.8\% to 1\% increase in cost and 5.6\% to 15.6\% increase in real palatability over the nominal solutions. When compared to the bootstrapped single-method models, it is generally \textit{more} conservative. This is consistent with the fact that it must satisfy the constraint estimate across the majority of the individual methods, forcing it to be conservative relative to this set.

\begin{table}[htbp]
\TABLE{Change in cost and palatability from nominal to bootstrapped ($P=10, \alpha = 0.25$) solution.\label{tab:robustness_compare}
}{
\begin{tabular}{@{}lcccccc@{}}
\toprule
 & \multicolumn{3}{c}{Objective value} & \multicolumn{3}{c}{Real palatability} \\ 
\textbf{Algorithm} & \textbf{Nominal} & \textbf{Bootstrapped} & \textbf{Change} & \textbf{Nominal} & \textbf{Bootstrapped} & \textbf{Change} \\ \midrule

Linear & 1337 & 1359 & 1.6\% & 0.496 & 0.512 & 3.1\% \\
SVM & 1306 & 1308 & 0.1\% & 0.541 & 0.548 & 1.2\% \\
CART & 1301 & 1307 & 0.5\% & 0.539 & 0.550 & 2.1\% \\
RF & 1305 & 1306 & 0.0\% & 0.543 & 0.551 & 1.5\% \\
GBM & 1300 & 1304 & 0.3\% & 0.532 & 0.553 & 3.9\% \\
MLP & 1307 & 1313 & 0.5\% & 0.537 & 0.587 & 9.4\% \\
\bottomrule
\end{tabular}}{}
\end{table}

\paragraph{Effect of number of estimators.}

Table~\ref{tab:robustness_time} compares the runtime as the number of estimators ($P$) increases up to 25 estimators. We see that the solve time for the linear, SVM, CART, and MLP models are stable as the number of estimators increases. In contrast, we see that the ensemble algorithms, RF and GBM, have exponential runtime increases as the number of estimators grows. RF and GBM are already comprised of multiple individual learners, so embedding multiple estimators involves adding multiple \textit{sets} of decision trees, which becomes computationally expensive. All results are reported over 100 instances. The experiments were run using a virtual computing environment with 4 CPU and 32 GB total RAM. We also report the runtime for an ensemble of estimators obtained from different model classes (``Ensemble"), using a single model from each class.

We further investigate the runtimes with 25 estimators in Table~\ref{tab:robustness_25}. The left side of the table reports the mean, median, and maximum runtimes for each method on the same 100 experiments as above. We see that the RF and GBM models have reasonable median solve times (6.66 and 18.80 minutes, respectively), but the average solve times are driven up by outlier instances that have significantly higher runtimes (max. 2110 and 1603 minutes, respectively). We propose to use a time limit to control the experiment times. On the right side of the table, we see that using a 4 hour time limit returns optimal solutions for 95\% of the RF runs and 82\% of the GBM runs, and feasible solutions for all but four GBM instances. In cases where an optimal solution is obtained, the average runtime is less than 40 minutes. In cases where the time limit is hit, the average remaining MIP gap is 1.02\% for RF and 5.21\% for GBM. The results suggest that imposing this termination condition results in high quality solutions with a modest optimality gap.

\begin{table}[htbp]
\TABLE{Optimization solver runtime (minutes) as a function of number of bootstrapped estimators. \label{tab:robustness_time}
}{
\begin{tabular}{@{}lcccc@{}}
\toprule
\textbf{Algorithm} & \textbf{P = 2} & \textbf{P = 5} & \textbf{P = 10} & \textbf{P = 25} \\ \toprule
\textbf{Linear} & 0.01           & 0.01           & 0.02            & 0.01            \\
\textbf{SVM}    & 0.01           & 0.01           & 0.02            & 0.01            \\
\textbf{CART}   & 0.02           & 0.02           & 0.03            & 0.12            \\
\textbf{RF}     & 0.15           & 1.34           & 11.93           & 44.87           \\
\textbf{GBM}    & 0.37           & 3.58           & 10.71           & 133.13          \\
\textbf{MLP}    & 0.01           & 0.02           & 0.04            & 0.48            \\ \midrule
\textbf{Ensemble} & \multicolumn{4}{c}{0.09} \\ \bottomrule
\end{tabular}}{}
\end{table}

\begin{table}[htbp]
\TABLE{Runtime results for $P=25$ estimators, both when solved to optimality (left) and with a 4 hour time limit (right). \label{tab:robustness_25}
}{
\resizebox{\textwidth}{!}{%
\begin{tabular}{@{}l|C{1.5cm}C{1.5cm}C{1.5cm}|cccc@{}}
\toprule
{ } & \multicolumn{3}{c|}{\textbf{Runtime (mins) to optimality}} & \multicolumn{4}{c}{\textbf{4 hour time limit}} \\ \midrule
\textbf{Algorithm} & \textbf{Mean} & \textbf{Median} & \textbf{Max} & \textbf{\% feasible} & \textbf{\% optimal} & \textbf{\begin{tabular}[c]{@{}c@{}}Avg. runtime \\ to optimality \\ (mins) \end{tabular}} & \textbf{\begin{tabular}[c]{@{}c@{}}Avg. \\ remaining \\ MIP gap\end{tabular}} \\
\textbf{Linear} & 0.01 & 0.01 & 0.04 & 100\% & 100\% & 0.01 & -- \\
\textbf{SVM} & 0.01 & 0.01 & 0.04 & 100\% & 100\% & 0.01 & -- \\
\textbf{CART} & 0.12 & 0.08 & 0.64 & 100\% & 100\% & 0.12 & -- \\
\textbf{RF} & 44.87 & 6.66 & 2109.72 & 100\% & 95\% & 19.76 & 1.02\% \\
\textbf{GBM} & 133.13 & 18.80 & 1603.51 & 96\% & 82\% & 37.79 & 5.21\% \\
\textbf{MLP} & 0.49 & 0.12 & 6.89 & 100\% & 100\% & 0.48 & -- \\ \bottomrule
\end{tabular}}}
{}
\end{table}


The runtime experiments raise a natural question: what is the impact of embedding a larger number of estimators? We consider the cost-palatability trade off for a decision tree model as we vary the number of estimators from $P=2$ to $P=50$, averaged over the candidate violation limits. The results are shown in Figure~\ref{fig:robustness_P}. As the number of estimators increases, the results tend to be more conservative. By 10 estimators, the trade off curve well-approximates the curves for higher estimator up to an inflection point where average cost increases significantly. By $P=25$ and $P=50$, the curves closely match, suggesting diminishing value in increasing the number of estimators beyond a certain point.

Finally, Table~\ref{tab:robustness_params} reports the parameters used in our bootstrapped models. For each method, we report the parameter grid that was used in our model training and selection procedure. Individual estimators use different combinations of these parameters based on the validation performance on the specific bootstrapped samples. We note that for these experiments, we used a default parameter grid implemented in \texttt{OptiCL}; this grid can be manually set by a user when specifying each outcome of interest before model training.

\begin{table}[htbp]
\TABLE{Default parameter grid for supported algorithms. \label{tab:robustness_params}
}{
\begin{tabular}{@{}ll@{}}
\toprule
\textbf{Algorithm} & \textbf{Parameter Grid} \\ \midrule
\textbf{Linear} & \begin{tabular}[c]{@{}l@{}}alpha': {[}0.1, 1, 10, 100, 1000{]}, \\ 'l1\_ratio': np.arange(0.1, 1.0, 0.2)\end{tabular} \\ \midrule
\textbf{SVM} & 'C': {[}.1, 1, 10, 100{]} \\ \midrule
\textbf{CART} & \begin{tabular}[c]{@{}l@{}}max\_depth': {[}3, 4, 5, 6, 7, 8, 9, 10{]}, \\ 'min\_samples\_leaf': {[}0.02, 0.04, 0.06{]}, \\ 'max\_features': {[}0.4, 0.6, 0.8, 1.0{]}\end{tabular} \\ \midrule
\textbf{RF} & \begin{tabular}[c]{@{}l@{}}n\_estimators': {[}10, 25{]}, \\ 'max\_features': {[}'auto'{]}, \\ 'max\_depth': {[}2, 3, 4{]}\end{tabular} \\ \midrule
\textbf{GBM} & \begin{tabular}[c]{@{}l@{}}learning\_rate': {[}0.01, 0.025, 0.05, 0.075, 0.1, 0.15, 0.2{]}, \\ 'max\_depth': {[}2, 3, 4, 5{]}, \\ 'n\_estimators': {[}20{]}\end{tabular} \\ \midrule
\textbf{MLP} & 'hidden\_layer\_sizes': {[}(10,), (20,), (50,), (100,){]} \\ \bottomrule
\end{tabular}}
{}
\end{table}

\begin{figure}[htbp]
\FIGURE{
    \centering
    \includegraphics[width=\linewidth]{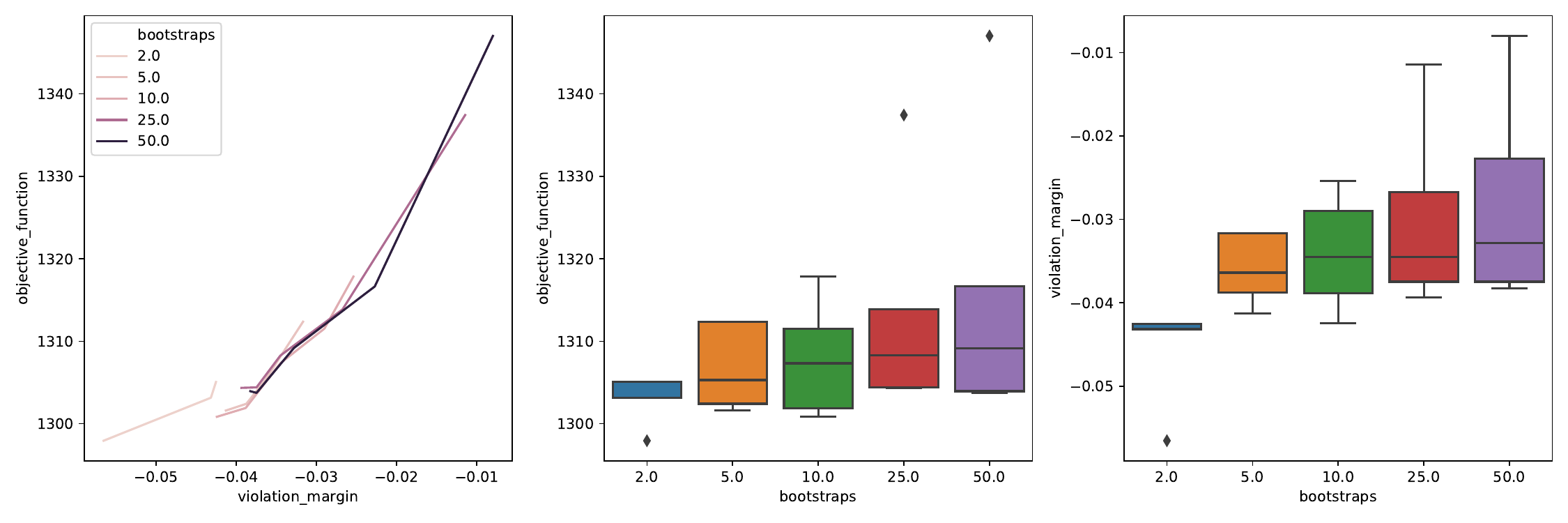}
}{Effect of the number of bootstrapped estimators ($P$) on the cost and palatability of the prescribed diet.\label{fig:robustness_P}}{}
\end{figure}


\section{Chemotherapy regimen design}
\subsection{Data Processing}\label{appendix:gastric:data}
The data for this case study includes three components, study cohort characteristics ($\bm{w}$), treatment variables ($\bm{x}$), and outcomes ($\bm{y}$). The raw data was obtained from ~\citet{Bertsimas2016}, in which the authors manually curated data from 495 clinical trial arms for advanced gastric cancer. Our feature space was processed as follows:

\paragraph{Cohort Characteristics.}
We included several cohort characteristics to adjust for the study context: fraction of male patients, median age, primary site breakdown (Stomach vs. GEJ), fraction of patients receiving prior palliative chemotherapy, and mean ECOG score. We also included variables for the study context: the study year, country, and number of patients. Missing data was imputed using multiple imputation based on the other contextual variables; 20\% of observations had one missing feature and 6\% had multiple missing features.

\paragraph{Treatment Variables.}
Chemotherapy regimens involve multiple drugs being delivered at potentially varied frequencies over the course of a chemotherapy cycle. As a result, multiple dimensions of the dosage must be encoded to reflect the treatment strategy. As in ~\citet{Bertsimas2016}, we include three variables to represent each drug: an indicator (1 if the drug is used in the regimen), instantaneous dose, and average dose. 

\paragraph{Outcomes.}
We use Overall Survival (OS) as our survival metric, as reported in the clinical trials. Any observations with unreported OS are excluded. We consider several ``dose-limiting toxicities" (DLTs): Grade 3/4 constitutional, gastrointestinal, infection, and neurological toxicities, as well as Grade 4 blood toxicities. The toxicities reported in the original clinical trials are aggregated according to the CTCAE toxicity classes~\citep{ctcae}. We also include a variable for the occurrence of any of the four individual toxicities ($t_i$ for each toxicity $i \in T$, called DLT proportion; we treat these toxicity groups as independent and thus define the DLT proportion as
\begin{align*}
    DLT = 1 - \prod_{i \in T} (1-t_i).
\end{align*}
We define Grade 4 blood toxicity as the maximum of five individual blood toxicities (related to neutrophils, leukocytes, lymphocytes, thrombocytes, anemia). Observations missing all of these toxicities were excluded; entries with partial missingness were imputed using multiple imputation based on other blood toxicity columns. Similarly, observations with no reported Grade 3/4 toxicities were excluded; those with partial missingness were imputed using multiple imputation based on the other toxicity columns. This exclusion criteria resulted in a final set of 461 (of 495) treatment arms.

We split the data into training/testing sets temporally. The training set consists of all clinical trials through 2008, and the testing set consists of all 2009-2012 trials. We exclude trials from the testing set if they use new drugs not seen in the training data (since we cannot evaluate these given treatments). We also identify sparse treatments (defined as being only seen once in the training set) and remove all observations that include these treatments. The final training set consists of 320 observations, and the final testing set consists of 96 observations. 

\subsection{Predictive Models}\label{appendix:gastric:predictive}
Table~\ref{tab:gastric:predictive_all} shows the out-of-sample performance of all considered methods in the model selection pipeline. We note that model choice is based on the 5-fold validation performance, so it does not necessarily correspond to the highest test set performance. The final parameters for each model and each outcome, selected through the cross-validation procedure, are shown in Table~\ref{tab:gastric:predictive_all_params}.


\begin{table}
\TABLE
    {Comparison of out-of-sample $R^2$ all considered models for learned outcomes in chemotherapy regimen selection problem.}
    {\begin{tabular}{@{}lrrrrrr@{}}
\toprule
\textbf{Outcome} & \textbf{Linear} & \textbf{SVM} & \textbf{CART} & \textbf{RF} & \textbf{GBM} \\ \midrule
Any DLT          & 0.268 & -0.094 & -0.016  & 0.152 & 0.202 \\
Blood            & 0.196 & -1.102 & 0.012  & 0.153 & 0.105 \\
Constitutional   & 0.106 & 0.144  & 0.157  & 0.194 & 0.136 \\
Infection        & 0.082 & -0.511 & -0.222  & 0.070  & 0.035 \\
Gastrointestinal & 0.141 & -0.196 & -0.023 & 0.066 & 0.083 \\ \midrule
Overall Survival & 0.448 & 0.385  & 0.474  & 0.496 & 0.450 \\ \bottomrule
\end{tabular}}
 {\label{tab:gastric:predictive_all}}
\end{table}

\begin{table}[htbp]
\TABLE{Predictive model parameters used in the chemotherapy case study.\label{tab:gastric:predictive_all_params}
}{
\centering
\begin{tabular}{lll}
\toprule
\textbf{Outcome} & \textbf{Model} & \multicolumn{1}{c}{\textbf{Parameters}} \\ \midrule
Any DLT & GBM & learning rate: 0.2, max depth: 2, number of estimators: 20 \\
Blood & Linear & ElasticNet parameters: 0.1 (alpha), 0.7 ($\ell_1$-ratio) \\
Constitutional & RF &  max depth : 4, max features: 'auto', number of estimators: 25 \\
Infection & Linear & ElasticNet parameters: 1 (alpha), 0.5 ($\ell_1$-ratio) \\
Gastrointestinal & GBM & learning rate: 0.1, max depth: 4, number of estimators: 20 \\
\midrule
Overall Survival & GBM & learning rate: 0.1, max depth: 3, number of estimators: 20 \\
\bottomrule
\end{tabular}}{}
\end{table}

\subsection{Prescription Evaluation}\label{appendix:gastric:evaluation}

Table~\ref{tab:gastric:predictive_gt} shows the performance of the models that comprise the ground truth ensemble used in the evaluation framework. These models trained on the full data. We see that the ensemble models, particularly RF and GBM, have the highest performance. These models are trained on more data and include more complex parameter options (\textit{e.g.}, deeper trees, larger forests) since they are not required to be embedded in the MIO and are rather used directly to generate predictions. The final parameters for each model and each outcome, selected through the cross-validation procedure, are shown in Table~\ref{tab:gastric:predictive_gt_params}. For this reason, the GT ensemble could also be generalized to consider even broader method classes that are not directly MIO-representable, such as neural networks with alternative activation functions, providing an additional degree of robustness.

\begin{table}
\TABLE
    {Performance ($R^2$) of individual models in ground truth ensemble for model evaluation.}
    {\begin{tabular}{@{}lrrrrrr@{}}
\toprule
\textbf{outcome} & \textbf{Linear} & \textbf{SVM} & \textbf{CART} & \textbf{RF} & \textbf{GBM} & \textbf{XGB} \\ \midrule
Any DLT          & 0.301 & 0.330  & 0.250  & 0.573 & 0.670  & 0.323 \\
Blood            & 0.287 & 0.351 & 0.211 & 0.701 & 0.813 & 0.446 \\
Constitutional   & 0.139 & 0.224 & 0.246 & 0.602 & 0.682 & 0.285 \\
Infection        & 0.217 & 0.303 & 0.139 & 0.514 & 0.588 & 0.247 \\
Gastrointestinal & 0.201 & 0.328 & 0.238 & 0.563 & 0.733 & 0.475 \\ \midrule
Overall Survival & 0.528 & 0.469 & 0.421 & 0.815 & 0.827 & 0.756 \\ \bottomrule
\end{tabular}}
 {\label{tab:gastric:predictive_gt}}
\end{table}

\begin{table}[htbp]
\TABLE{Predictive model parameters used in the ground truth ensemble for model evaluation. \label{tab:gastric:predictive_gt_params}
}{
\centering
\begin{tabular}{@{}llllllll@{}}
\toprule
\textbf{Algorithm} & \textbf{Parameter} & \textbf{Any DLT} & \textbf{Blood} & \textbf{Const.} & \textbf{Inf.} & \textbf{GI} & \textbf{OS} \\ \midrule
\textit{Linear} & alpha & 0.1 & 0.1 & 1 & 1 & 1 & 0.1 \\
 & $\ell_1$ ratio & 0.6 & 0.5 & 0.4 & 0.3 & 0.7 & 0.8 \\
 \midrule
\textit{SVM} & regularization parameter & 100 & 100 & 1 & 10 & 100 & 0.1 \\
\midrule
\textit{CART} & max depth & 3 & 3 & 4 & 3 & 5 & 3 \\
 & max features & 1 & 1 & 0.6 & 0.6 & 0.6 & 0.8 \\
 & min samples per leaf & 0.04 & 0.06 & 0.06 & 0.06 & 0.06 & 0.02 \\
 \midrule
\textit{RF} & max depth & 6 & 8 & 6 & 6 & 6 & 8 \\
 & max features & auto & auto & auto & auto & auto & auto \\
 & number of estimators & 500 & 500 & 500 & 250 & 250 & 250 \\
 \midrule
\textit{GBM} & learning rate & 0.01 & 0.025 & 0.01 & 6 & 0.01 & 0.01 \\
 & max depth & 5 & 5 & 5 & auto & 6 & 5 \\
 & number of estimators & 250 & 250 & 250 & 250 & 250 & 250 \\
 \midrule
\textit{XGB} & cols sampled by tree & 0.8 & 1 & 0.8 & 1 & 0.8 & 1 \\
 & gamma & 0.5 & 0.5 & 1 & 1 & 0.5 & 10 \\
 & max depth & 4 & 5 & 4 & 4 & 5 & 4 \\
 & min child weight & 10 & 1 & 10 & 10 & 1 & 10 \\
 & number of estimators & 250 & 250 & 250 & 250 & 250 & 250 \\
 & subsample & 1 & 0.8 & 0.8 & 0.8 & 0.8 & 1 \\ \bottomrule
\end{tabular}}{
Const. = Constitutional, Inf. = Infection, GI = Gastrointestinal, OS = Overall survival.
}
\end{table}
\subsection{Optimization runtimes}\label{appendix:gastric:runtime}
Table~\ref{tab:gastric:runtimes} reports the runtimes of the optimization model results presented in Section~\ref{subsubsect:gastric:optimization_results}, Table~\ref{tab:gastric:gt}. Results are averaged over all patients in the test set.
\begin{table}[h]
\TABLE{Average (and standard deviation) of runtimes for gastric cancer case, in seconds. \label{tab:gastric:runtimes}
}{
    \begin{tabular}{lc}
    \toprule
    \textbf{Model Version} & \textbf{Average Time (SD)} \\
    \midrule
    All Constraints   &  0.511 (0.892) \\
    DLT Only  & 0.203 (0.433) \\
    \bottomrule
    \end{tabular}
    }{}
\end{table}

\section{Comparison with JANOS and EML}
\label{appendix:opticlvjanos}
As mentioned earlier in Section \ref{subsect:litrev}, JANOS and EML are two software frameworks for embedding learned ML models in optimization problems. In this section, we compare the performance of \texttt{OptiCL} to those of JANOS and EML using the test problems in \citet{bergman2019janos} and \citet{lombardi2017empirical}, respectively. The experiments are conducted using an Intel i7-8665U 1.9 GHz CPU, 16 GB RAM (Windows 10 environment).

\subsection{OptiCL vs JANOS}\label{appendix:opticlvJANOS}
In the Student Enrolment Problem (SEP) in \citet{bergman2019janos}, a university's admission office seeks to offer scholarships to some of the admitted students in order to bolster the class profile. The objective is to maximize the expected class size subject to budget constraints. This problem is formulated as:

\begin{subequations}
\begin{align}
\max & \ \sum_{i=1}^{N} y_{i}\\
\mbox{s.t.} \ & \sum_{i=1}^{N} x_i \leq \textbf{BUDGET},  \\
& y_i = \hat{h}(s_i, g_i, x_i) & \forall i \in \{ 1, \ldots, N\}, \\
& 0 \leq x_i \leq 25,000 & \forall i \in \{ 1, \ldots, N\}, 
\end{align}
\end{subequations}

\noindent
where $x_i$ is the decision variable indicating the amount of scholarship assigned to each student accepted, $s_i$ is the SAT score of applicant $i$, and $g_i$ is the GPA score of applicant $i$. The predicted outcome $y_i$ represents the probability of a candidate $i$ accepting the offer, and $\hat{h}$ is the fitted model used to predict any candidate's probabilities of accepting an offer. The parameters $s_i$, $g_i$, and the decision variable $x_i$ are the predictive model’s inputs.
In order to compare OptiCL and JANOS, we solved the SEP for different student sizes, and compared the objective values and runtimes. Although \texttt{OptiCL} and JANOS handle neural network embedding in a similar manner, JANOS uses a parameterized discretization to handle logistic regression predictions. We therefore compared their performances only using the logistic regression models, as we expected to see a difference in performance based on the differences in implementation.  In the experiments reported in Figure~\ref{fig:OPTICLvsJANOS}, we discretize the logistic regression (LogReg) in JANOS using three different number of intervals (reported between brackets in the Figure legend). From the experiments, we can see that \texttt{OptiCL} achieves better objective values in all three instances. It can also be seen that for the larger problems, \texttt{OptiCL} is much more efficient in terms of optimization runtime than JANOS.

\begin{figure}[ht]
    \caption{Objective value (right) and runtime (left) comparison between \texttt{OptiCL and JANOS} for the SEP.}%
    \label{fig:OPTICLvsJANOS}%
    \centering{{\includegraphics[width=5.4cm]{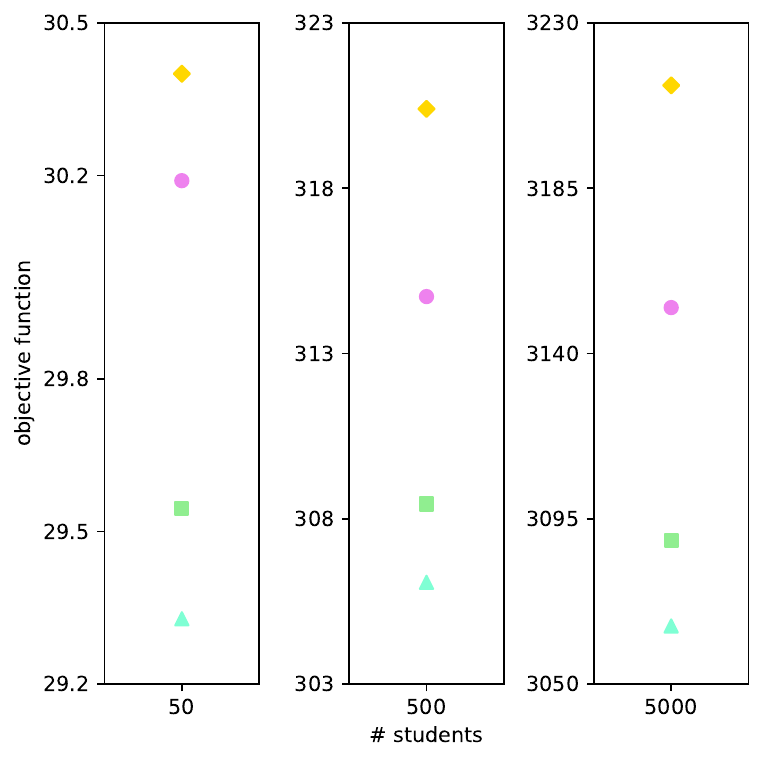} }}%
    \qquad{{\includegraphics[width=9.5cm]{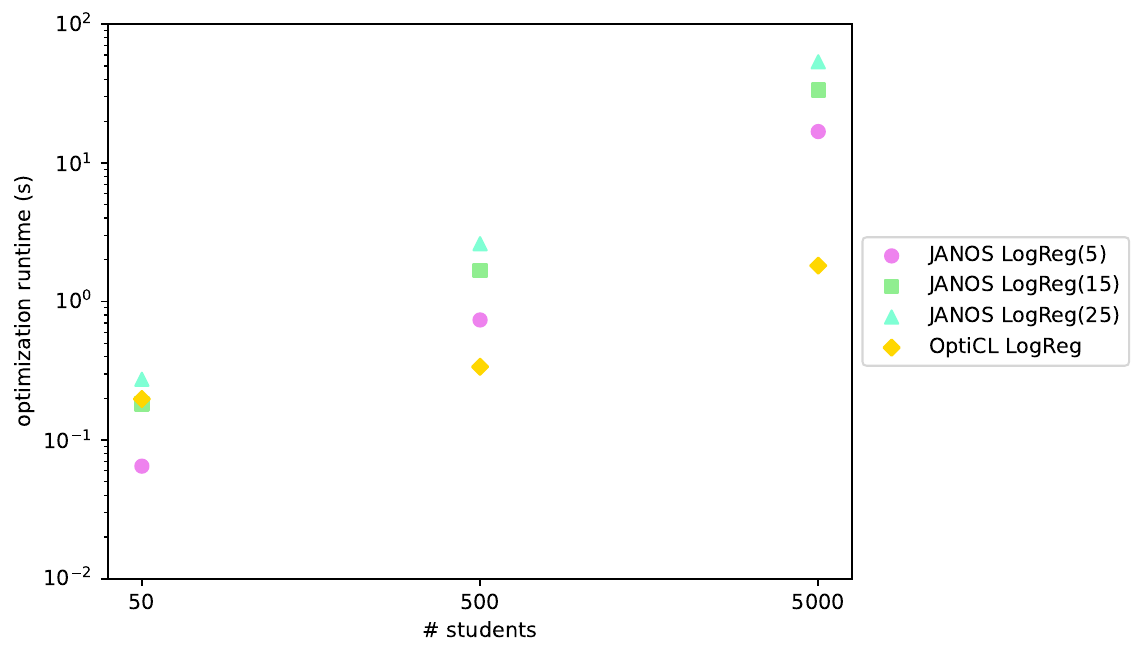} }}%
\end{figure}

\subsection{OptiCL vs EML}\label{appendix:opticlvEML}
In the thermal-aware Workload Dispatching Problem (WDP) in \citet{lombardi2017empirical}, the goal is to assign jobs to the different cores on a multi-core processor. The processor has 24 dual-core tiles arranged in a 4$\times$6 grid, resulting in an arrangement with 48 cores in an 8$\times$6 grid. A direct comparison between \texttt{OptiCL} and EML is not possible, as \citet{lombardi2017empirical} do not use neural networks or decision trees for constraint learning in MIO problems. Their focus for these predictive models are Local Search, Constraint Programming, or SAT Modulo Theory problems. What we do, however, is demonstrate that \texttt{OptiCL} is able to solve the example in an MIO setting. The model considered here is the ``ANN1" model in \citet{lombardi2017empirical} given as:

\begin{subequations}
\begin{align}
\max & \ z \\
\mbox{s.t.} \ & z \leq y_{k} & \forall k = 0, \ldots, m-1,  \label{eqn:zleqy}\\
& y_{k} = \hat{h}_{k}(avgcpi_k, neighcpi_k, othercpi_k) & \forall k = 0, \ldots, m-1,  \\
& \sum_{k=0}^{m-1} x_{ik} = 1 & \forall i = 0, \ldots, n-1,  \label{eqn:onejob}\\
& \sum_{i=0}^{n-1} x_{ik} = \dfrac{n}{m} & \forall k = 0, \ldots, m-1,  \label{eqn:equalnumjobs}\\
& avgcpi_k = \dfrac{1}{m}\sum_{i=0}^{n-1}cpi_ix_{ik} & \forall k = 0, \ldots, m-1 \label{eqn:avgcpi},  \\
& neighcpi_k = \dfrac{1}{m}\sum_{h\in N(k)}avgcpi_h & \forall k = 0, \ldots, m-1, \label{eqn:neighcpi} \\
& othercpi_k = \dfrac{1}{m-1-|N(k)|}\sum_{h\neq k,h\notin N(k)}avgcpi_h & \forall k = 0, \ldots, m-1, \label{eqn:othercpi} \\
& x_{ik} \in \{0,1\}  & \forall i = 0, \ldots, n-1 ~~~  \forall k = 0, \ldots, m-1,
\end{align}
\end{subequations}

\noindent
where $x_{ik}$ is the binary decision variable indicating if a job $i$ is mapped on core $k$ or not. The parameter $cpi_i$ represents the average Clock Per Instructions (CPI) characterizing job $i$, and is a measure of the difficulty of job $i$. The objective is to maximize the worst-case core efficiency, and the fitted model $\hat{h}_k$ is used to predict the efficiency of core $k$ that is represented by $y_k \in [0,1]$. 

Constraints \eqref{eqn:onejob} ensures that each job is mapped to only one core, and \eqref{eqn:equalnumjobs} forces the same number of jobs to run on each core. Constraints \eqref{eqn:avgcpi}, \eqref{eqn:neighcpi} and \eqref{eqn:othercpi} are used to compute the average CPI for a core $k$, the average CPI for the cores in the neighborhood of $k$ ($N(k)$), and the average CPI for cores not in the neighborhood of $k$ respectively.
\cite{lombardi2017empirical} conclude that learning the efficiency function for each core by means of neural networks (with one hidden layer of two nodes and $\tanh$ activation function) is computationally intractable. On the contrary, our experiments show that we are able to solve this problem using neural networks with one hidden layer and 10 nodes in a reasonable amount of time (19.4 seconds). We tried deeper neural networks, but the increase in computational complexity did not lead to a gain in predictive performance.

\end{APPENDICES}
\end{document}